\documentclass[12pt]{amsart}
\usepackage{graphicx}
\usepackage{amsmath}
\usepackage{amssymb, amscd}
\usepackage{epstopdf}
\usepackage[all]{xy}
\DeclareGraphicsRule{.tif}{png}{.png}{`convert #1 `basename #1 .tif`.png}

\textwidth=6in
\textheight=8in
\oddsidemargin=0.2in
\evensidemargin=0.2in
\footskip=0.25in

\newtheorem{theorem}{Theorem}[section]
\newtheorem{lemma}[theorem]{Lemma}
\newtheorem{corollary}[theorem]{Corollary}

\newtheorem{prop}[theorem]{Proposition}
\theoremstyle{definition}

\def\eproof{$\Box$ \medskip}

\def\chb{{\rm Dome}(\Omega)}
\def\chbz{{\rm Dome}(\Omega_0)}
\def\chbt{{\rm Dome}(\Omega_t)}
\def\chbu{\widetilde{{\rm Dome}(\Omega)}}

\newcommand\rs{\hat{\mathbb{C}}}
\newcommand\C{\mathbb{C}}

\newcommand\R{\mathbb{R}}
\newcommand\Sp{\mathbb{S}}
\newcommand\U{\mathbb{U}}
\newcommand\Ht{\mathbb{H}^3}
\newcommand\Hp{\mathbb{H}^2}

\newcommand\Z{\mathbb{Z}}

\title[Uniformly perfect domains and convex hulls]{Uniformly perfect domains and convex hulls: improved bounds in a
generalization of a Theorem of Sullivan}

\author{Martin Bridgeman}
\address{Boston College}
\author{Richard D. Canary}
\address{University of Michigan}
\dedicatory{Dedicated to Dennis Sullivan on the occasion of his 70th birthday.}
\date{\today}
\thanks{Canary was partially supported by NSF grant DMS - 1006298.}

\begin{document}
\begin{abstract}
Given a hyperbolic domain $\Omega$, the nearest point retraction is
a conformally natural homotopy equivalence from $\Omega$ to  the
boundary $\chb$ of the convex core of its complement. Marden and Markovic
showed that if $\Omega$ is uniformly perfect, then there exists a conformally natural
quasiconformal map from $\Omega$ to $\chb$ which admits a bounded homotopy
to the nearest point retraction. We obtain an explicit upper bound on the quasiconformal
dilatation which depends
only on the injectivity radius of the domain.  \end{abstract}

\maketitle

\section{Introduction}

If $\Omega$ is a hyperbolic domain in the Riemann sphere $\rs$, then the boundary
$\chb$ of the convex core of its complement is a hyperbolic surface in its
intrinsic metric  (\cite{EM87,ThBook}). The nearest point retraction $r:\Omega\to\chb$,
which maps
a point $z\in\Omega$ to the unique point of intersection of the smallest horoball based at $z$ which intersects
$\chb$, gives a conformally natural homotopy equivalence.
Sullivan \cite{sullivan} (see also \cite{EM87,EMM2}) showed that there exists $K_0$ such that
if $\Omega$ is simply connected, then $r$ is homotopic to a  conformally natural
$K_0$-quasiconformal map.
Marden and Markovic \cite{MM} showed that if $\Omega$ is uniformly perfect, then
there is  a  conformally natural quasiconformal map from $\Omega$ to $\chb$ which admits
a bounded homotopy to the nearest point retraction and that the dilatation of
the map may be bounded in terms of  the injectivity
radius of $\Omega$ in the Poincar\'e metric.  (We recall that $\Omega$ is uniformly
perfect if and only if there is a non-zero lower bound for the injectivity radius of $\Omega$ in
the Poincar\'e metric.) In this paper, we combine the results of \cite{BC10} and  the techniques of
Epstein-Marden-Markovic  \cite{EMM2} to obtain explicit  bounds on the quasiconformal
dilatation. We further show that these bounds are close to optimal.

\begin{theorem}
There exists a function $M: \R_+ \rightarrow \R_+$ such that if
$\Omega\subset\rs$ is uniformly perfect and
$\nu>0$ is a lower bound
for its injectivity radius in the Poincar\'e metric, then there is a  conformally natural
\hbox{$M(\nu)$-quasiconformal} map 
$$\phi:\Omega\to\chb$$
which admits
a bounded homotopy to the nearest point retraction $r:\Omega\to\chb$.
Moreover,
$$ M(\nu)=48\pi e^me^{\pi^2/2\nu}+12\pi\le 2220e^{\pi^2/2\nu}+ 38$$
where $m=\cosh^{-1}(e^2)\approx 2.689$.
\label{main}
\end{theorem}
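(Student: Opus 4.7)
The plan is to combine the averaging construction of Epstein, Marden and Markovic \cite{EMM2} with the explicit average bending estimates from \cite{BC10}. In the EMM2 scheme one builds a conformally natural quasiconformal approximation to the nearest point retraction $r$ by perturbing $r$ according to an integral weighted by the bending measure of the dome $\chb$, so that the dilatation at a point $z$ is controlled by the total bending of $\chb$ in a ball of fixed radius about $r(z)$. The input from \cite{BC10} is that the average bending of a pleated plane in a ball of given radius can be bounded in terms of the injectivity radius $\nu$ of the associated hyperbolic surface. Combining these two ingredients and tracking constants produces the explicit $M(\nu)$.

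I would proceed as follows. First, lift to universal covers: since $\Omega$ is hyperbolic, $\tilde\Omega\cong\D$, and the dome lifts to a pleated plane $\chbu\subset\Ht$ with transverse bending measure $\beta$, and the nearest point retraction lifts equivariantly to $\tilde r:\D\to\chbu$. Next, apply the EMM2 averaging construction to produce a conformally natural map $\tilde\phi:\D\to\chbu$ whose value at $z$ is obtained from $\tilde r(z)$ by integrating along support planes of $\chbu$ over a bounded neighborhood of $\tilde r(z)$ in the intrinsic metric; conformal naturality forces equivariance under the deck group, so $\tilde\phi$ descends to $\phi:\Omega\to\chb$, and since $\tilde\phi$ differs from $\tilde r$ by a uniformly bounded displacement, $\phi$ is boundedly homotopic to $r$. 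Third, estimate the pointwise dilatation of $\tilde\phi$ in terms of $\beta(B)$ where $B$ is a ball in $\chbu$ of bounded radius centered at $\tilde r(z)$; the radius $m=\cosh^{-1}(e^2)$ that appears in the bound is precisely the natural geometric cutoff of this construction, and the factor $e^m$ records the Euclidean size of the corresponding spherical cap on $\rs$. Finally, invoke the \cite{BC10} average bending estimate to replace $\beta(B)$ by a uniform bound of order $e^{\pi^2/2\nu}$, which reflects the universal upper bound on the number of translates of a fundamental domain of $\chb$ that can meet a ball of bounded radius when the injectivity radius is at least $\nu$. Assembling the numerical constants gives $48\pi e^m e^{\pi^2/2\nu}+12\pi$.

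The main obstacle will be the final bookkeeping. Both the EMM2 construction and the BC10 estimate produce explicit but delicate constants, and matching their product to the form $48\pi e^m e^{\pi^2/2\nu}+12\pi$ requires careful control of the norms of the averaging kernel, of the spherical cap area factors at radius $m$, and of the residual dilatation (the additive $12\pi$ term) contributed by the averaging operator even in the bending-free limit. Once these are kept honest, the weaker numerical bound $2220e^{\pi^2/2\nu}+38$ follows from the elementary estimates $48\pi e^m<2220$ and $12\pi<38$. The conceptual skeleton of the argument is already in place in \cite{EMM2} and \cite{BC10}; the new content here is the sharpening and combination of the constants.
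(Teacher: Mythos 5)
Your proposal describes a conformally natural averaging construction (a bending-weighted integral kernel in the spirit of Douady--Earle), with dilatation controlled by total bending in balls of fixed radius and with the injectivity radius entering through an ``average bending'' estimate. That is not how the paper proceeds, and the difference matters for where the constants come from. The paper uses the complex earthquake and complex angle scaling machinery of \cite{EMM1,EMM2}: one rescales the bending lamination $\mu$ of $\chb$ to have unit roundness, chooses $t_0 = iy_0$ with $y_0$ close to $1/3$, builds the holomorphic-motion map $\Phi_{t_0}:\Hp\to\Omega_{t_0}$ of Theorem~\ref{holo motion} and the complex angle scaling map $\Psi_k$ of Theorem~\ref{scale} with $k = ||\mu||\, i$, and shows that $F=\Psi_k\circ\Phi_{t_0}$ descends under the pleating covering $P_\mu$ to a $K_{t_0}L_k$-quasiconformal homeomorphism $f:\chb\to\Omega$, with $K_{t_0}L_k\le 6||\mu||$. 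The needed bound on $||\mu||$ is Corollary~\ref{norm bound 2}, which is not an average bending estimate: it combines the $2\pi$ intersection bound of Proposition~\ref{bend bound} (from \cite{BC03}, not from \cite{BC10}) with the injectivity radius comparison of Proposition~\ref{inj radii}, which says that $g(\nu)=\tfrac12 e^{-m}e^{-\pi^2/2\nu}$ is a lower bound for the injectivity radius of $\chb$.

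Consequently your proposed provenance for the constants is wrong, and the argument as you describe it would not naturally reproduce them. The factor $e^m$ with $m=\cosh^{-1}(e^2)$ is not a ball-radius cutoff or a spherical cap area factor; it comes from $g(\nu)$, i.e.\ from the comparison between the injectivity radii of $\Omega$ and of $\chb$. The additive $12\pi$ is not a residual dilatation of an averaging operator; it is $6\cdot 2\pi$, where $2\pi$ is the intersection bound of Proposition~\ref{bend bound} and $6=K_{t_0}/y_0$ for the chosen earthquake parameter. Finally, the route you sketch --- lift $r$ to a quasi-isometry and apply a conformally natural extension --- is essentially what the authors did in the earlier paper \cite{BC10}, where they explicitly note the resulting bounds were far from optimal; the improvement in this paper comes precisely from replacing that averaging step with complex angle scaling. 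You would need to abandon the averaging framework and switch to the complex earthquake setup to obtain the stated $M(\nu)$.
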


We also obtain a version of our main theorem, where the quasiconformal dilatation is bounded by
the injectivity radius of $\chb$. We recall that $\Omega$ is uniformly perfect if and only if
there is a positive lower bound for the injectivity radius of $\chb$  (see Bridgeman-Canary
\cite[Lemmas 8.1 and 9.1]{BC03}).

\begin{theorem}
\label{main2}
There exists a function $N: \R_+ \rightarrow \R_+$ such that if
$\Omega\subset\rs$ is uniformly perfect and
$\hat\nu>0$ is a lower bound
for the injectivity radius  of $\chb$ in its intrinsic metric, then there is a  conformally natural
\hbox{$N(\hat\nu)$-quasiconformal} map 
$$\phi:\Omega\to\chb$$
which admits a bounded homotopy to the nearest point retraction \hbox{$r:\Omega\to\chb$}.
Moreover,
$$ N(\hat\nu) =\frac{24\pi}{\hat\nu}+12\pi .$$
\end{theorem}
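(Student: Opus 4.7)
The plan is to parallel the proof of Theorem~\ref{main}, but to carry out the critical estimate directly on the convex hull side so that the intrinsic injectivity radius $\hat\nu$ enters linearly rather than exponentially. The conformally natural map $\phi:\Omega\to\chb$ would be constructed by the same Epstein-Marden-Markovic averaging procedure used for Theorem~\ref{main}, together with the refinements from \cite{BC10}; what changes is how the resulting dilatation is controlled.

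In Theorem~\ref{main}, the dilatation is bounded by a universal additive contribution of $12\pi$ plus a term that measures how much the bending lamination of $\chb$ can concentrate on short essential loops. The exponential factor $e^{\pi^2/2\nu}$ enters because one starts with a lower bound on the Poincar\'e injectivity radius of $\Omega$ and must transfer that information to the pleated surface $\chb$ through the nearest point retraction; this transfer goes through a collar-type estimate in the Poincar\'e metric and accounts for the exponential loss. If instead one is given $\hat\nu$ as a lower bound on the intrinsic injectivity radius of $\chb$, this transfer step is bypassed entirely: the collar lemma on the hyperbolic surface $\chb$ directly bounds the total bending that can be concentrated in any Margulis tube by a quantity of order $1/\hat\nu$. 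Tracking the geometric constants through the same final step as in Theorem~\ref{main} then produces a main term of $24\pi/\hat\nu$ in place of $48\pi e^m e^{\pi^2/2\nu}$, while the additive $12\pi$, which has nothing to do with injectivity radius, survives unchanged.

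I expect the main obstacle to be bookkeeping: isolating within the proof of Theorem~\ref{main} the single step where the Poincar\'e injectivity radius of $\Omega$ enters as a surrogate for geometric control on $\chb$, and verifying that replacing it by the intrinsic estimate does not disturb either the conformal naturality of $\phi$ or the existence of the bounded homotopy to $r$. Since every other ingredient in the argument is a geometric estimate intrinsic to $\chb$ (or is the EMM averaging machinery, which is insensitive to the choice of injectivity input), the substitution should propagate cleanly, and the improved constants should emerge from the same structural estimate that produced $48\pi e^m$ and $12\pi$, but with the exponential factor deleted and one factor of $2$ saved in the constant out front.
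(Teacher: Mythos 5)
Your proposal is structurally correct and matches the paper's approach: Theorems \ref{main} and \ref{main2} are proved simultaneously by a single construction, with the bound $K_{t_0}L_k \le 6\|\mu\|$ on the dilatation, and the only difference is whether the roundness bound $\|\mu\| \le \frac{4\pi}{\hat\nu} + 2\pi$ (Proposition~\ref{norm bound}) is applied directly, yielding $N(\hat\nu)$, or first routed through the injectivity-radius transfer $\hat\nu \ge g(\nu) = \tfrac{1}{2}e^{-m}e^{-\pi^2/2\nu}$ (Proposition~\ref{inj radii}), which is exactly where the exponential factor and the extra factor of $2$ in $M(\nu)$ come from. One small correction of terminology: the construction is not the Douady--Earle averaging procedure (which is what the earlier, far weaker bounds of \cite{BC10} relied on) but the Epstein--Marden--Markovic complex earthquake and complex angle scaling machinery of Theorems~\ref{holo motion} and~\ref{scale}.
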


We recall that a map $f:\Omega\to \Ht$ is said to be {\em conformally natural} if
whenever $\Gamma$ is a group of conformal automorphisms  of $\rs$ which preserve $\Omega$,
then 
$$f(\gamma(z))=\gamma(f(z))$$ 
for all $z\in\Omega$ and $\gamma\in \Gamma$ where on the right-hand side 
we have extended the conformal automorphism of $\rs$ to the associated isometry of
$\Ht$.

If $N=\Ht/\Gamma$ is a hyperbolic 3-manifold, we let $\Omega(\Gamma)$ be the largest
open set in $\rs$ which $\Gamma$ acts properly discontinuously on and consider
the {\em conformal boundary} \hbox{$\partial_cN=\Omega(\Gamma)/\Gamma$}. 
We will assume that $\Gamma$ is not virtually abelian, which guarantees that $\Omega(\Gamma)$ is
hyperbolic. The 
{\em convex core} $C(N)$ of $N$ is  the quotient
$CH(\rs-\Omega(\Gamma))/\Gamma$ where $CH(\rs-\Omega(\Gamma))$ denotes
the convex hull in $\Ht$ of $\rs-\Omega(\Gamma)$. Then
$${\rm Dome}(\Omega(\Gamma))=\partial CH(\rs-\Omega(\Gamma))$$
and the
nearest point retraction $r:\Omega(\Gamma)\to {\rm Dome}(\Omega(\Gamma))$ descends to 
a homotopy equivalence 
$$\bar r:\partial_cN\to \partial C(N).$$
If $\Omega(\Gamma)$ is uniformly perfect, 
the conformally natural quasiconformal map
$$\phi:\Omega(\Gamma)\to {\rm Dome}(\Omega(\Gamma))$$
guaranteed by Theorem \ref{main}   descends to a quasiconformal map
$$\bar\phi:\partial_cN\to \partial C(N)$$
which admits a bounded homotopy to $\bar r$.
We recall that $\Omega(\Gamma)$ is uniformly
perfect whenever $\Gamma$ is finitely generated (see Pommerenke \cite{pommerenke}).

\begin{corollary}
If $N=\Ht/\Gamma$ is a hyperbolic 3-manifold and
$\nu>0$ is a lower bound for the injectivity radius of $\Omega(\Gamma)$,
then there is a
$M(\nu)$-quasiconformal map 
$$\bar\phi:\partial_cN\to\partial C(N)$$
which admits
a bounded homotopy
to the nearest point retraction $\bar r$.

Moreover, if $\hat{\nu}>0$ is a lower bound for the injectivity radius of 
${\rm Dome}(\Omega(\Gamma))$, then $\phi$ is $N(\hat{\nu})$-quasiconformal.
\end{corollary}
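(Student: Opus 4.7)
The plan is to obtain this corollary as a direct consequence of Theorems \ref{main} and \ref{main2} by exploiting conformal naturality to descend the maps to the quotient.

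First I would verify that the hypotheses of Theorem \ref{main} apply to $\Omega(\Gamma)$. A positive lower bound $\nu$ on the injectivity radius of $\Omega(\Gamma)$ in the Poincar\'e metric is by definition equivalent to $\Omega(\Gamma)$ being uniformly perfect, so we may invoke Theorem \ref{main} to produce a conformally natural $M(\nu)$-quasiconformal map $\phi:\Omega(\Gamma)\to \mathrm{Dome}(\Omega(\Gamma))$ together with a bounded homotopy $H$ from $\phi$ to the nearest point retraction $r$.

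Next I would push $\phi$ down to the quotient. Since $\Gamma$ acts on $\rs$ by conformal automorphisms preserving $\Omega(\Gamma)$, conformal naturality gives $\phi(\gamma(z))=\gamma(\phi(z))$ for every $\gamma\in\Gamma$ and $z\in\Omega(\Gamma)$, where $\gamma$ acts on $\mathrm{Dome}(\Omega(\Gamma))\subset\Ht$ via the extension to an isometry of $\Ht$. Hence $\phi$ is $\Gamma$-equivariant and descends to a well-defined map $\bar\phi:\partial_cN\to\partial C(N)$. Because the quotient projections are local conformal isometries (on $\partial_cN$) and local isometries (on $\partial C(N)$), the quasiconformal dilatation is preserved, so $\bar\phi$ is $M(\nu)$-quasiconformal. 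Likewise the nearest point retraction $r$ is conformally natural and descends to $\bar r$; one checks (using the same naturality) that the bounded homotopy $H$ from $\phi$ to $r$ is itself $\Gamma$-equivariant, and hence descends to a homotopy from $\bar\phi$ to $\bar r$ whose tracks have bounded length in $N$.

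For the second assertion I would run exactly the same argument using Theorem \ref{main2} in place of Theorem \ref{main}, noting that a lower bound $\hat\nu$ on the injectivity radius of $\mathrm{Dome}(\Omega(\Gamma))$ gives the required bound on the injectivity radius of its $\Gamma$-invariant cover. In truth there is no substantive obstacle here: all of the content lies in Theorems \ref{main} and \ref{main2}, and the only thing to be careful about is verifying that conformal naturality is strong enough to make every object in sight $\Gamma$-equivariant so that the quotient construction is legitimate.
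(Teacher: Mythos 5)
Your proposal is correct and follows essentially the same route as the paper, which proves the corollary by the same descent argument in the paragraph immediately preceding its statement: conformal naturality of $\phi$ makes it $\Gamma$-equivariant, so it descends to $\bar\phi$ with the same dilatation, and the bounded homotopy likewise descends. The only slight inaccuracy is calling the equivalence of uniform perfectness with a positive lower bound on injectivity radius a matter of definition; it is a characterization that the paper explicitly recalls, not a definition.
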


It is instructive to consider the special case of
a round annulus  $\Omega(s)$ of modulus $\frac{s}{2\pi}$. In section 5, we observe
that its minimal injectivity radius is given by \hbox{$\nu(s)=\frac{\pi^2}{s}$} while the minimal injectivity
radius of \hbox{${\rm Dome}(\Omega(s))$} is given by \hbox{$\hat\nu(s)=\frac{\pi}{\sinh(s/2)}$}.
The minimal quasiconformal dilatation of a quasiconformal map from
$\Omega(s)$ to ${\rm Dome}(\Omega(s))$ is then given by
$$K(s)=\frac{\pi\sinh(s/2)}{s}\approx \frac{\nu(s)e^{\pi^2/2\nu(s)}}{2\pi}
\approx \frac{\pi^2}{2\hat\nu(s)\log(1/\hat\nu(s))}$$
(where the approximations hold as $s\to\infty$).
These examples indicate that our estimates ``almost'' have the correct asymptotic form.

In a final section, we obtain lower bounds on the quasiconformal constant when
a uniformly perfect domain has ``small'' injectivity radius. 

\medskip\noindent
{\bf Proposition \ref{lower bound}.} {\em
Let $\Omega$ be a uniformly perfect domain in $\rs$. Suppose that \hbox{$\phi:\Omega\to\chb$} is a $K$-quasiconformal
map which is homotopic to the nearest point retraction. If $\Omega$ contains
a point of injectivity radius $\nu\in(0,.5)$, in the Poincar\'e metric, then
$$K\ge {\nu e^{\pi^2\over 2\sqrt{e}\nu}\over \pi^2e^{\pi\over2}}=O\left(\nu e^{\pi^2\over 2\sqrt{e}\nu}\right).$$
}

\medskip\noindent
{\bf Acknowledgements:} The authors would like to thank the referees for their careful reading of
the paper and helpful suggestions.

\section{Historical Discussion}

The investigation of the relationship between the geometry of a domain and the boundary of the
convex core of its complement was initiated by Dennis Sullivan. We take the occasion of Dennis'
70th birthday as an excuse to provide a  partial 
tour of  results inspired by and/or related to Sullivan's Theorem.

\medskip\noindent
{\bf Sullivan's Theorem:} {\rm (\cite{sullivan})} 
{\em
There exists $K_0>1$ such that if $N=\Ht/\Gamma$ is a hyperbolic
3-manifold with finitely generated fundamental group and each component
of $\Omega(\Gamma)$ is simply connected, then the nearest point retraction
is homotopic to a \hbox{$K_0$-quasiconformal} map 
$$\phi:\partial_cN\to \partial C(N).$$
}

\medskip

Sullivan used his theorem in his exposition of Thurston's proof that 3-manifolds
fibering over the circle are geometrizable. 
(In fact, Sullivan only states his theorem in the setting of quasifuchsian hyperbolic
3-manifolds.) The key issue there
was to establish a relationship between
the geometry of the conformal boundary  and the internal geometry of the
hyperbolic 3-manifold.
From this viewpoint, Sullivan's Theorem may be
viewed as a vast generalization of a fundamental result of Bers.

\medskip\noindent
{\bf Bers' Theorem:} {\rm (\cite[Theorem 3]{bers-slice})} {\em  Suppose that
$N=\Ht/\Gamma$ is a hyperbolic
3-manifold with finitely generated fundamental group and each component
of $\Omega(\Gamma)$ is simply connected. If $\alpha$ is a closed geodesic in $\partial_cN$
(in the Poincar\'e metric) and $\alpha^*$ is the geodesic in $N$ in the homotopy
class of $\alpha$, then
$$l_N(\alpha^*)\le 2l_{\partial_cN}(\alpha)$$
where $l_N(\alpha^*)$ denotes the length of $\alpha^*$ in $N$ and
$l_{\partial_cN}(\alpha)$ denotes the length of $\alpha$ in $\partial_cN$.
}

\medskip

Bers also originally stated his result only in the setting of quasifuchsian
hyperbolic 3-manifolds. We note that in many applications, Bers' result provides
all the information one needs about the relationship between the geometry of
the conformal boundary and the geometry of the hyperbolic 3-manifold.

Another generalization of Bers' Theorem is provided by Epstein, Marden and Markovic:

\begin{theorem} 
{\rm (Epstein-Marden-Markovic \cite[Theorem 3.1]{EMM1})}
If $\Omega$ is a simply connected hyperbolic domain, then
the nearest point retraction is $2$-Lipschitz.
\end{theorem}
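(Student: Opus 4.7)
The plan is to establish the bound infinitesimally: at each $z_0 \in \Omega$ and each tangent vector $v$ at $z_0$, to show $|dr_{z_0}(v)|_\chb \le 2\,|v|_\Omega$. This globalizes by integration of lengths along paths to the stated Lipschitz bound.

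By conformal naturality, any M\"obius transformation of $\rs$ lifts to an isometry of $\Ht$ preserving $r$, the Poincar\'e metric on $\Omega$, and the intrinsic metric on $\chb$. I would normalize so that $z_0 = 0$, $r(z_0) = (0,0,1)$, and the supporting geodesic plane to $\chb$ at $r(z_0)$ is the unit hemisphere $\{x^2+y^2+t^2 = 1,\, t>0\}$. In this frame the maximal horoball at $z_0$ tangent to $\chb$ is the Euclidean ball of radius $1/2$ centered at $(0,0,1/2)$, and $CH(\rs \setminus \Omega)$ lies in $\{x^2+y^2+t^2 \ge 1\}$. Restricting to $\C$ yields $\rs \setminus \Omega \subset \{|w|\ge 1\}\cup\{\infty\}$, so $\D \subset \Omega$; moreover, for the hemisphere to genuinely support $CH$ at $(0,0,1)$, $\partial \Omega$ must meet the unit circle, forcing $d_{\rm Euc}(0, \partial \Omega) = 1$.

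Two ingredients now combine. First, Koebe's $1/4$ theorem applied to the Riemann map of $\Omega$ gives $\rho_\Omega(0) \ge 1/(2\,d_{\rm Euc}(0, \partial \Omega)) = 1/2$, so $|v|_\Omega \ge |v|/2$ in Euclidean norm. Second, one estimates $|dr_{z_0}(v)|_\chb$ directly from the horoball characterization: the tangent plane to $\chb$ at $(0,0,1)$ is horizontal (matching the supporting hemisphere's tangent plane), so on it the intrinsic metric coincides with the $\Ht$-metric, which at $t=1$ is Euclidean. Differentiating the horoball-tangency condition at perturbed basepoints $z_0 + \epsilon v$, and writing $\chb$ locally as a graph $t = f(x,y)$, yields $dr_{z_0}(v) = (I + H_f(0)/2)^{-1} v$. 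Since $\chb$ sits outside the hemisphere locally, the Hessian satisfies $H_f(0) \succeq -I$, so $(I + H_f(0)/2)^{-1}$ has operator norm at most $2$, giving an Euclidean upper bound on $|dr_{z_0}(v)|_\chb$.

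Combining the Koebe lower bound on $\rho_\Omega(0)$ with the derivative upper bound produces $|dr_{z_0}(v)|_\chb \le 2\,|v|_\Omega$. The main technical obstacle is the case where $r(z_0)$ lies on a bending line of $\chb$: there $\chb$ is only $C^{1,1}$ and has a tangent cone in place of a tangent plane, so the derivative computation must be interpreted directionally. One handles this by choosing each admissible supporting plane in turn, verifying the differentiated tangency bound within it, and invoking the fact that the intrinsic metric on $\chb$ agrees with the $\Ht$-metric on each flat piece; the pointwise bound then integrates across bending locus to give the global $2$-Lipschitz estimate.
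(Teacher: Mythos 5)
The paper does not prove this statement; it is quoted from Epstein--Marden--Markovic \cite{EMM1}. So I will assess your argument on its own terms, and there is a genuine gap: the two estimates you combine at the end do not multiply out to $2$.

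Track the constants carefully. After your normalization, you have (i) the Euclidean bound $|dr_{z_0}(v)|_{\rm Euc}\le \|(I+H_f(0)/2)^{-1}\|\,|v|_{\rm Euc}\le 2|v|_{\rm Euc}$ from $H_f(0)\succeq -I$, and (ii) the Koebe bound $\rho_\Omega(0)\ge \tfrac{1}{2}$, i.e.\ $|v|_{\rm Euc}\le 2|v|_\Omega$. Combining these gives $|dr_{z_0}(v)|_{\chb}\le 2|v|_{\rm Euc}\le 4|v|_\Omega$, i.e.\ the Lipschitz constant $4$, not $2$. So your final sentence ``Combining \dots\ produces $|dr_{z_0}(v)|_{\chb}\le 2|v|_\Omega$'' asserts a conclusion your lemmas do not yield.

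The loss is not an artifact of sloppy bookkeeping but a structural problem: your two bounds are never simultaneously tight. Since $\chb$ is a pleated surface, at every point where the graph $f$ is $C^2$ (the interior of a flat piece) $\chb$ locally coincides with the supporting hemisphere, so $H_f(0)=-I$ exactly and your operator norm is exactly $2$ --- no slack. To land on $2$-Lipschitz there you would need $\rho_\Omega(0)\ge 1$, not the Koebe floor of $\tfrac12$; Koebe alone cannot give this. Conversely, the Koebe bound $\rho_\Omega(0)=\tfrac12$ is attained only for slit-plane-type domains, and for these $r(z_0)$ lies on (or degenerates onto) the bending locus, precisely where your $C^2$ derivative computation does not apply. (Indeed for $\Omega=\C\setminus[1,\infty)$ with $z_0=0$ one checks directly that the pointwise Lipschitz constant is exactly $2$, realized in the direction of the slit, while $dr$ is rank-one.) So the bending-line case, which you defer as a ``technical obstacle,'' is in fact the only place where the estimate is tight, and the flat-piece case, where your formula does apply, is not actually handled by the bounds you invoke. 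The sharp constant $2$ requires a single inequality that couples the curvature of $\chb$ with the Koebe distortion of $\Omega$, rather than bounding each independently.

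One more minor point: you should justify that $\chb$ is locally a Euclidean graph over the tangent plane near $(0,0,1)$ and that the tangency system is differentiable there before invoking the implicit-function computation; this is fine on flat pieces but, as noted, those are not where the argument needs to work.
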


Bridgeman \cite{bridgeman} showed that, in the setting of Sullivans' Theorem, the nearest point retraction
\hbox{$\bar r:\partial_cN\to \partial C(N)$} has a \hbox{$(1+\frac{2\pi}{\log (3)})$}-Lipschitz
homotopy inverse. This paper also introduced the technique of bounding
the total bending along an arc in the boundary of the convex hull which was
used in many subsequent results.
Bishop \cite[Lemma 8]{bishop-divergence} observed that 
$r$ is a quasi-isometry with uniform quasi-isometry constants when $\Omega$
is simply connected. Bridgeman and Canary \cite{BC10}
provide explicit bounds

\begin{theorem}{\rm (Bridgeman-Canary \cite{BC10})}
If $\Omega$ is a simply connected hyperbolic domain, then
the nearest point retraction is a $(L',L_0')$-quasi-isometry, where
$L'\approx 4.56$ and $L_0'\approx 8.05$.
\end{theorem}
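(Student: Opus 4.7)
The plan is to establish the two directions of the quasi-isometry inequality separately, since the tools available are very different in each direction. The upper direction is essentially free: the Epstein--Marden--Markovic theorem cited earlier in the excerpt gives that $r$ is $2$-Lipschitz on any simply connected hyperbolic domain, so $d_{\chb}(r(z),r(w))\le 2\, d_\Omega(z,w)$, and $L'\approx 4.56$ absorbs this with no additive slack.

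The substantive work is the reverse inequality. My plan is to construct a coarse section of $r$ and estimate its Poincar\'e distortion. Given points $x=r(z)$ and $y=r(w)$ in $\chb$, connect them by a dome geodesic $\alpha$ and lift to a curve $\beta$ in $\Omega$ by choosing, at each $\alpha(t)$, a canonical point of the fiber $r^{-1}(\alpha(t))$; a natural choice is the apex of the smallest horoball based on $\rs$ that realizes the nearest-point condition at $\alpha(t)$. One would then bound the Poincar\'e length of $\beta$ in terms of the intrinsic length of $\alpha$, up to a uniform multiplicative factor and an additive constant that accounts for the distance from $z,w$ to the endpoints of $\beta$.

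The key geometric input is the bending-control estimate used in Bridgeman's earlier work and in the Epstein--Marden--Markovic papers: along any arc in $\chb$ of bounded intrinsic length, the total bending measure is itself bounded. The strategy would be to decompose $\alpha$ into subarcs of controlled bending, and on each subarc compare the Poincar\'e speed of $\beta$ to the unit speed of $\alpha$ via an explicit model computation in $\Ht$ involving two support planes meeting along a known dihedral angle. Summing the local comparisons produces the multiplicative constant $L'$, while the boundary corrections and the residual bending accumulated in the endpoint fibers contribute to $L_0'$.

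The main obstacle is numerical rather than conceptual. That $r$ is some quasi-isometry is essentially Bishop's folk observation; extracting the explicit values $L'\approx 4.56$ and $L_0'\approx 8.05$ requires a careful optimization of the threshold used to slice $\alpha$, since short subarcs inflate $L'$ through repeated application of the local comparison, while long subarcs inflate $L_0'$ through cumulative bending. I expect the hardest step to be the sharp two-plane model computation in $\Ht$, where one needs tight control on how far apart two points of $\Omega$ can lie when their nearest-point projections are close on $\chb$ but sit on support planes of widely differing orientation.
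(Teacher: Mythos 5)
This statement appears in the Historical Discussion section and is cited, not proved, in the paper; the only clue the paper gives to the actual argument in \cite{BC10} is the remark that the quasi-isometry result ``is obtained as a corollary of a result which asserts that if we give \emph{any} hyperbolic domain $\Omega$ the Thurston metric, then the nearest point retraction is 1-Lipschitz and a $(L,L_0)$-quasi-isometry where $L\approx 8.49$ and $L_0\approx 7.12$,'' with the simply connected case then recovered by comparing the Thurston and Poincar\'e metrics. Your proposal takes a genuinely different route: rather than passing through the Thurston metric, you invoke the Epstein--Marden--Markovic $2$-Lipschitz bound for the forward direction and then try to build a coarse section of $r$ whose Poincar\'e distortion you control by slicing a dome geodesic into arcs of bounded total bending and doing a two-support-plane model computation. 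That is much closer in spirit to Bridgeman's 1998 average-bending argument (which produced the $(1+\tfrac{2\pi}{\log 3})$-Lipschitz homotopy inverse) than to the Thurston-metric mechanism actually used in \cite{BC10}.

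Two cautions about the proposed route. First, the ``canonical point of the fiber $r^{-1}(\alpha(t))$'' you want (``the apex of the smallest horoball \dots'') is not actually well-defined: for $x$ on a bending line the fiber is a nondegenerate arc and there is no distinguished horoball-apex; the usual fix is to project along the support plane normal and handle the two-plane (crescent) case by an explicit interpolation, and that choice does affect the constants. Second, and more substantively, a bending-slicing argument of the kind you describe will not by itself reproduce $L'\approx 4.56$, $L_0'\approx 8.05$: those numbers come out of the Thurston-metric comparison (which trades the multiplicative slack in the Lipschitz bound for an additive term via the simply connected Thurston-vs-Poincar\'e inequality), not from optimizing a slice length in an average-bending estimate. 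So your outline is a reasonable plan for proving that $r$ \emph{is} a quasi-isometry with some uniform constants, but it is not the paper's (or \cite{BC10}'s) proof and should not be expected to land on the stated values without essentially redoing the Thurston-metric analysis.
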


Another generalization of Bers' result is given by the length bounds on curves developed in
the solution of Thurston's Ending Lamination Conjecture, see \cite{ELC1,ELC2}.
These results also provide lower bounds on the length of closed geodesics in a quasifuchsian hyperbolic 3-manifold in terms of the geometry of the conformal boundary.

Epstein and Marden \cite{EM87} published the first complete proof of Sullivan's Theorem and 
noted that its proof can be extended to the setting of simply connected hyperbolic
domains in $\rs$. Moreover, they obtained the first explicit bounds on $K_0$.

\begin{theorem}{\rm (Epstein-Marden \cite{EM87})} If $\Omega$ is a simply
connected, hyperbolic domain in $\rs$, then there exists a  
conformally natural $K_0$-quasiconformal map
$$\phi:\Omega\to\chb$$
which admits a bounded homotopy to the nearest point retraction.
Moreover,
$$K_0\le 82.7.$$
\end{theorem}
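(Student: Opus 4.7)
The plan is to exploit the pleated structure of $\chb$. Since $\Omega$ is simply connected, $\chb$ is a pleated plane, isometric to $\Hp$ in its intrinsic metric, and by Thurston's theorem it is bent along a geodesic lamination $\beta$ carrying a transverse bending measure. The crucial simply-connected input is Thurston's universal bound on the bending measure along any intrinsic geodesic segment of bounded length; this is precisely what fails for general hyperbolic domains and what this paper overcomes (in the uniformly perfect setting) by paying a penalty depending on the injectivity radius.

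First I would construct a model map by ``unfolding'' the bending. Each connected component $F$ of $\chb\setminus|\beta|$ is a totally geodesic polygon in $\Ht$ lying in a support plane $P_F$; the ideal boundary $D_F=\partial_\infty P_F$ is a round disk in $\rs$ meeting $\Omega$, and the restriction of $r$ to $D_F\cap\Omega$ is already orthogonal projection onto $F$, which is conformal. The Epstein-Marden construction then defines $\phi:\Omega\to\chb$ so that on each such $D_F\cap\Omega$ it agrees with orthogonal projection, and across each pleat $\ell$ of bending angle $\theta$ it interpolates the two adjacent projections by a M\"obius family rotating through angle $\theta$ along $\ell$. Equivalently, one packages this as the developing map of the canonical $\mathbb{CP}^1$-structure (the Thurston map) associated to the pleating data $(\Hp,\beta)$.

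The dilatation bound is then obtained by computing the Beltrami coefficient $\mu_\phi$. By construction $\mu_\phi$ vanishes on preimages of flat pieces, so the distortion is confined to thin collars around pleats, with local contribution controlled explicitly by the local bending angle. Integrating against the bending measure and invoking Thurston's universal simply-connected bending bound yields a uniform estimate on $\|\mu_\phi\|_\infty$, hence on $K_0$. Conformal naturality is automatic because every ingredient (convex hull, support planes, bending measure, interpolation recipe) is canonical under M\"obius transformations preserving $\Omega$, and the bounded homotopy to $r$ is immediate since $\phi(z)$ and $r(z)$ both lie within a bounded hyperbolic distance of the same support plane in $\Ht$. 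The main obstacle is the explicit constant: one must make the pleat interpolation simultaneously canonical and numerically sharp, balancing local dilatation against total bending along representative geodesics, and then optimize these estimates to reach $K_0\le 82.7$.
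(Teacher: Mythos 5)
The paper does not prove this statement; it quotes it as a historical landmark (attributed to Epstein--Marden \cite{EM87}) in the survey section, so there is no ``paper's proof'' to compare against. Epstein and Marden's actual argument in \cite{EM87} is built on equidistant surfaces and normal flow: they push $\chb$ out to a smooth surface at distance $\lambda$, compare the induced metric there both with the Poincar\'e metric on $\Omega$ and with the intrinsic metric on $\chb$, and optimize over $\lambda$ to reach the numerical constant. That is a genuinely different route from the ``unfold the bending'' scheme you sketch. Your scheme is much closer in spirit to the complex earthquake and complex angle-scaling machinery of Epstein--Marden--Markovic \cite{EMM1,EMM2}, which is what this paper actually uses (Theorems \ref{holo motion} and \ref{scale}) to prove its own generalization; in that setting the simply connected case gives roundness $\le 2\pi$ and the machinery yields a constant of about $12\pi\approx 37.7$, not $82.7$.

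There is also a concrete gap in the proposed construction. The crescent $r^{-1}(\ell)$ attached to a pleat $\ell$ is a two-dimensional region whose two circular boundary arcs are \emph{both} sent to $\ell$ by the two adjacent orthogonal projections; the ``M\"obius family rotating through angle $\theta$'' interpolation as described therefore collapses the whole crescent onto the one-dimensional geodesic $\ell$, which is just the nearest point retraction $r$ itself and is not a homeomorphism, let alone quasiconformal. To get a quasiconformal map one cannot pass directly from $\chb$ (degenerate crescents of zero angle) to $\Omega$; one needs an intermediate domain $\Omega_0$ with small but nonzero bending, as in the paper's proof, and the dilatation then comes from a Schwarz-lemma estimate on the parameter strip $\mathcal{T}_0$ (via the Riemann map $h$ and the coefficient $\kappa(t)$), not from an integration of the bending measure. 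Relatedly, $\|\mu_\phi\|_\infty$ is a supremum, so ``integrating against the bending measure'' cannot by itself control it; the correct ingredient is the local roundness bound $\sup_C\mu(C)\le 2\pi$ over unit transversals, which is a pointwise--local quantity, and in the non-atomic case there are no isolated pleats at all, so the ``distortion concentrated in collars'' picture needs a separate limiting argument.
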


Thurston \cite{thurstonII} asked
whether or not one could choose $K_0=2$. 
Epstein, Marden and Markovic \cite{EMM1,EMM2}  showed that 
that the optimal value of $K_0$ lies between $2.1$ and $13.88$
(see also Komori-Matthews \cite{komori-matthews} for an explicit  Kleinian group whose
domain of discontinuity provides a counterexample
to Thurston's $K=2$ question).
Bishop \cite{bishop} showed that if one does not require the quasiconformal map 
to be conformally natural, then one can bound the quasiconformal dilatation above by 7.88.
Epstein and Markovic \cite{EM05}, showed that even if one removes the requirement that the quasiconformal
map be conformally natural, one cannot always bound the quasiconformal dilatation above by 2.

The first generalization of Bers' result to the setting of hyperbolic manifolds
where the domain of discontinuity is not simply connected was provided by:

\begin{theorem}{\rm (Canary \cite{canary91})}
There exists a function $R: (0,\infty)\to (0,\infty)$ such that if
$N=\Ht/\Gamma$ is a hyperbolic 3-manifold,
$\nu>0$ is a lower bound for the injectivity radius of $\Omega(\Gamma)$, and
$\alpha$ is a closed geodesic in $\partial_cN$ of length $l(\alpha)$, then
$$l_N(\alpha^*) \le R(\nu) l (\alpha)$$
where $R(\nu)=\max \left\{ \sqrt{2}(k + log 2),\ \sqrt{2}\left( {k\nu + 8\pi k + 2\pi^2 \over \nu}\right) \right\}$
and $k=4+\log(3+2\sqrt{2})$.
\end{theorem}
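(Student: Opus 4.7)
The plan is to factor the length comparison $\alpha\mapsto\alpha^*$ through the dome $\chb$ of $\Omega(\Gamma)$. Lift $\alpha$ to a closed curve $\tilde\alpha$ in $\Omega(\Gamma)$; then $r(\tilde\alpha)$ descends to a closed loop on $\partial C(N)$ freely homotopic to $\alpha^*$. Because the inclusion $\chb\hookrightarrow \Ht$ preserves path length (the dome is pleated along totally geodesic pieces) and $\alpha^*$ is the length-minimizing representative of its free homotopy class,
\[ l_N(\alpha^*)\ \le\ l_{\mathrm{dome}}\bigl(r(\tilde\alpha)\bigr). \]
The task thus reduces to bounding the dome length of $r(\tilde\alpha)$ in terms of the Poincar\'e length of $\alpha$, i.e.\ to an estimate for the Lipschitz constant of the nearest point retraction $r:\Omega(\Gamma)\to\chb$ in terms of $\nu$.

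To estimate $\|dr_z\|$ at a point $z\in\Omega(\Gamma)$, I would split into two regimes based on the ratio of the Poincar\'e injectivity radius at $z$ to the Euclidean distance $d_{\mathrm{eucl}}(z,\partial\Omega)$. When $z$ is ``close'' to $\partial\Omega$ on the Poincar\'e scale, the Epstein--Marden analysis of the local behaviour of $r$ gives a uniform Lipschitz bound by $k=4+\log(3+2\sqrt{2})$; here $\log(3+2\sqrt{2})=2\log(1+\sqrt{2})$ is the standard upper bound on the total bending a short arc on $\chb$ can encounter. In the complementary regime, where $z$ lies deep inside $\Omega$ on the Poincar\'e scale, uniform perfectness enters via a Pommerenke-type lower bound $\rho_\Omega(z)\,d_{\mathrm{eucl}}(z,\partial\Omega)\gtrsim \nu$. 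This converts a Poincar\'e unit ball at $z$ into a Euclidean disk of controlled size, and tracking how $r$ spreads such a disk over $\chb$ produces an additional contribution of order $(8\pi k+2\pi^2)/\nu$. After a further factor of $\sqrt{2}$ coming from the comparison of tangential and normal components of $dr_z$, integration along $\tilde\alpha$ yields
\[ l_{\mathrm{dome}}\bigl(r(\tilde\alpha)\bigr)\ \le\ \sqrt{2}\Bigl(k+\frac{8\pi k+2\pi^2}{\nu}\Bigr)\,l(\alpha), \]
and the na\"ive additive bound $\sqrt{2}(k+\log 2)\,l(\alpha)$, which supersedes this one for large $\nu$, gives the other branch of the $\max$.

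The principal obstacle is the deep-interior estimate: although uniform perfectness controls the Euclidean diameter of a Poincar\'e unit ball, one still has to show that $r$ cannot spread such a ball too badly along the dome, which requires an explicit comparison between the Poincar\'e density and the intrinsic geometry of $\chb$ at scale $\nu$. This is where the essential $\nu^{-1}$-divergence of $R(\nu)$ is produced, and it cannot be avoided by appealing to the simply connected bound of Epstein--Marden, which has no dependence on injectivity radius. Once this estimate is in hand, combining it with the length-preservation of $\chb\hookrightarrow\Ht$, the minimality of $\alpha^*$, and identifying the crossover between the two branches of the $\max$ completes the proof.
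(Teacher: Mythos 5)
The paper does not prove this statement; it is quoted verbatim from Canary \cite{canary91} in the Historical Discussion, so the relevant benchmark is the argument in that reference. Your opening reduction is correct and standard: $r(\tilde\alpha)$ descends to a loop on $\partial C(N)$ freely homotopic to $\alpha^*$, the dome includes into $\Ht$ preserving path length, and $\alpha^*$ minimizes length in its free homotopy class, so $l_N(\alpha^*)\le l_{\partial C(N)}(r(\tilde\alpha))$. Everything after that, however, is an outline around a hole you yourself name as ``the principal obstacle.'' The deep-interior estimate --- showing $r$ spreads a Poincar\'e ball over $\chb$ by at most $(8\pi k+2\pi^2)/\nu$ --- is the entire content of the theorem and is not carried out. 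The constants $8\pi k$, $2\pi^2$, $\log 2$, and the overall $\sqrt 2$ are asserted as outcomes, not derived from anything in the proposal, and the ``regime split'' between near-boundary and deep-interior points is phrased loosely (``close to $\partial\Omega$ on the Poincar\'e scale'' is not well defined, since the Poincar\'e distance to $\partial\Omega$ is infinite; what you want is a threshold on $\rho_\Omega(z)\,d_{\mathrm{eucl}}(z,\partial\Omega)$ or on the local injectivity radius).

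There is also a structural mismatch with the reference that your proposal cannot repair by filling in details. You frame the problem as estimating $\|dr_z\|$ pointwise, i.e.\ as a Lipschitz bound for $r$ in terms of $\nu$. But that pointwise Lipschitz estimate was established only later, in \cite{BC10} (quoted in this very paper with constant $2\sqrt 2(k+\pi^2/2\nu)=\sqrt2(2k+\pi^2/\nu)$), and it is strictly \emph{sharper} than $R(\nu)$: for small $\nu$ it gives $\sqrt2\,\pi^2/\nu$ versus Canary's $\sqrt2(8\pi k+2\pi^2)/\nu$, roughly a factor of $1+8k/\pi\approx 16$ better. So if one could actually execute a pointwise differential estimate, the result would be a different and smaller constant, not $R(\nu)$. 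Canary's 1991 argument is coarser: one covers the lift $\tilde\alpha$ by a controlled number of large round disks in $\Omega(\Gamma)$ (the count per unit Poincar\'e length is where the injectivity-radius hypothesis enters, via a lower bound on $\rho_\Omega(z)\,d_{\mathrm{eucl}}(z,\partial\Omega)$ of Koebe/Pommerenke type), applies the simply connected Epstein--Marden machinery on each disk to bound the dome-diameter of its image by a fixed constant involving $k$, and sums. That covering mechanism is what produces the $8\pi k$ prefactor, and it is absent from your proposal.
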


Canary \cite{cbbc} later established bounds on lengths of curves in the boundary
of the convex core without assuming that domain of discontinuity was 
uniformly perfect.

\begin{theorem}{\rm (Canary \cite{cbbc})} Suppose that $N$ is a hyperbolic 3-manifold and that
\hbox{$r:\partial_cN\to \partial C(N)$} is the nearest point retraction
from its conformal boundary to the boundary of its convex core. If
$\alpha$ is a closed geodesic in the conformal boundary of length  $L$,
then
$$l_{\partial C(N)}(r(\alpha)^*)<\ 45 L e^{L\over 2}$$
where $l_{\partial C(N)}(r(\alpha)^*)$ denotes the length of
the closed geodesic $r(\alpha)^*$  in the homotopy class of $r(\alpha)$ in the intrinsic metric on $\partial C(N)$. \end{theorem}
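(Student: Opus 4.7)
The plan is to convert the bound on the length of the geodesic representative $r(\alpha)^*$ into a bound on the length of $r(\alpha)$ itself, and then to estimate the latter by integrating a pointwise distortion function for the nearest point retraction along $\alpha$.

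First I would use the elementary inequality
\begin{equation*}
l_{\partial C(N)}(r(\alpha)^*)\ \le\ l_{\partial C(N)}(r(\alpha))\ \le\ \int_\alpha \|Dr\|\, ds,
\end{equation*}
where $\|Dr\|(z)$ is the operator norm of the derivative of $r$ at $z$, computed with respect to the Poincar\'e metric on $\partial_c N$ and the intrinsic hyperbolic metric on $\partial C(N)$. So it is enough to bound this integral by $45\,L\,e^{L/2}$.

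Second, I would establish a pointwise bound for $\|Dr\|(z)$ in terms of the local geometry of $\Omega(\Gamma)$. Lifting to the universal cover and imitating the strategy of Epstein--Marden--Markovic \cite{EMM1}, the point $r(z)$ is the tip of the largest horoball at $z$ disjoint from the convex hull, and an infinitesimal variation of $z$ moves $r(z)$ by an amount controlled by the nearby support of the bending lamination. In the simply connected case this yields $\|Dr\|\le 2$; without that hypothesis one must allow the possibility that $z$ sits deep inside an embedded collar of some short closed geodesic $\beta\subset\partial_cN$, and the distortion bound degenerates by a factor comparable to $\cosh$ of the depth of penetration.

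Third, and this is the critical estimate, I would integrate the pointwise distortion along $\alpha$. The curve $\alpha$ is not arbitrary: it is itself a Poincar\'e geodesic of length $L$, so its interaction with the standard collar of any short closed geodesic $\beta$ is geometrically rigid. Either $\alpha=\beta$, or $\alpha$ meets the collar transversally, and the number of transverse crossings together with the depth of penetration is controlled by $L$ and $l(\beta)$ via the collar lemma (width $w(\beta)=\sinh^{-1}(1/\sinh(l(\beta)/2))$). Splitting the integral into the portions of $\alpha$ in the thick part of $\partial_cN$ (where the distortion is bounded by a universal constant from the EMM bound) and in the thin parts (annular collars of short closed geodesics) gives a sum that can be bounded by $C\,L\,e^{L/2}$.

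The main obstacle is the last step, and in particular extracting the exponential factor $e^{L/2}$ with a reasonable constant. This factor is forced by the situation in which $\alpha$ winds many times around a very short geodesic $\beta$: the image $r(\alpha)$ then approximately traces $r(\beta)^*$ a large number of times, so one needs an independent bound on $l_{\partial C(N)}(r(\beta)^*)$ as $l(\beta)\to 0$. This short-geodesic base case can be handled by an explicit computation on a model thin-annulus domain, after which the winding count from the collar lemma bootstraps the estimate to an arbitrary closed geodesic $\alpha$ of length $L$. The delicate balance is between the pointwise distortion (which wants to be large in the thin parts) and the geometric constraints on how much Poincar\'e length of $\alpha$ can actually be concentrated inside any one thin collar.
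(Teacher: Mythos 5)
This statement is quoted verbatim from Canary's earlier paper \cite{cbbc}; the present paper cites it but contains no proof, so there is no internal argument to compare your attempt against. I will therefore assess your proposal on its own terms.

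Your overall scheme is sound and does produce the stated asymptotics: bound $l_{\partial C(N)}(r(\alpha)^*)$ by $\int_\alpha \|Dr\|\,ds$, establish a pointwise bound $\|Dr\|(z)\lesssim 1/\nu(z)$ where $\nu(z)$ is the injectivity radius of $\partial_cN$ at $z$, and then use the Collar Lemma to show that a closed geodesic $\alpha$ of length $L$ cannot penetrate the collar of a shorter geodesic to depth greater than $L/2$, so that $\nu(z)\gtrsim e^{-L/2}$ along $\alpha$. This already yields $\int_\alpha\|Dr\|\lesssim L\,e^{L/2}$ with no further case analysis; the thick/thin decomposition and crossing counts you describe are harmless but superfluous once these two lemmas are in hand. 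However, the load-bearing ingredient, the pointwise degeneration $\|Dr\|(z)\lesssim 1/\nu(z)$ for a general non-simply-connected hyperbolic domain, is by no means an elementary extension of the Epstein--Marden--Markovic $2$-Lipschitz estimate. It is essentially the Lipschitz theorem of Bridgeman--Canary (\cite{BC03,BC10}; Theorem \ref{BCqi} above), whose proof requires bounding the total bending of the dome along short transversals via a delicate analysis of supporting planes, and it postdates \cite{cbbc}, so it cannot be the route Canary originally took. In particular, the suggestion to obtain this bound from an ``explicit computation on a model thin-annulus domain'' does not work: the nearest point retraction at a point $z$ depends on the global set $\hat{\C}\setminus\Omega$, not merely on a collar neighborhood, so no purely local model captures $Dr$.

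Two smaller points. First, the picture of ``$\alpha$ winding many times around a very short geodesic $\beta$'' is misleading; a closed geodesic cannot spiral around a distinct shorter geodesic. If $\alpha\neq\beta$ they either cross transversally or are disjoint, and in either case what the Collar Lemma controls is the \emph{depth} of penetration into $\beta$'s collar, which is the source of the factor $e^{L/2}$. Second, $r$ is not everywhere differentiable, so $\int_\alpha\|Dr\|\,ds$ should be interpreted via the local Lipschitz constant (which suffices, since $r$ is Lipschitz on compacta and $\alpha$ is compact); this is a technicality rather than a gap, but it should be said.
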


Bridgeman and Canary \cite{BC03} showed that if $\nu>0$ is a lower bound for the injectivity radius
of $\Omega(\Gamma)$ and $\Gamma$ is finitely generated, then the nearest point retraction is
\hbox{$2\sqrt{2}(k+\frac{\pi^2}{2\nu})$}-Lipschitz. Moreover, the nearest point retraction
has a $P(\hat\nu)$-Lipschitz homotopy inverse where $P(\hat\nu)\approx{ 2\pi\over \hat\nu}$ as
$\hat\nu\to 0$.

Marden and Markovic \cite{MM} established the first direct analogue of Sullivan's Theorem
in the setting of uniformly perfect hyperbolic domains.

\begin{theorem}{\rm (Marden-Markovic \cite{MM})} 
Suppose that $\Omega$ is a hyperbolic domain in $\rs$. There exists a 
conformally natural quasiconformal
map $\phi:\Omega\to \chb$ which admits a bounded homotopy to
the nearest point retraction if and only if $\Omega$ is uniformly perfect.
Moreover, the quasiconformal dilatation of $\phi$ may be bounded by some
function of the injectivity radius of $\Omega$ in the Poincar\'e metric.
\end{theorem}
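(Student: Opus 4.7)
The plan is to establish both implications of the equivalence. For the forward direction, I would assume $\Omega$ is uniformly perfect with injectivity radius bounded below by $\nu > 0$, and construct the quasiconformal map via an Epstein-type extension. Recall that $\chb$ is a pleated surface, bent along a geodesic lamination $\mu$ carrying a transverse bending measure $\beta$. Passing to the universal cover $\widetilde{\Omega}$, one obtains a lift $\chbu$ isometric to $\Hp$, and Epstein's extension construction assigns to each $z \in \widetilde{\Omega}$ a point on $\chbu$ together with a supporting hyperplane, producing a smooth conformally natural map $\widetilde{\phi} : \widetilde{\Omega} \to \chbu$. Because the construction depends only on the intrinsic bending data, $\widetilde{\phi}$ is equivariant under the deck group and descends to the desired $\phi : \Omega \to \chb$. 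The bounded homotopy to $r$ is visible in the universal cover, since $\widetilde{\phi}(z)$ and the nearest-point image both lie in a uniformly bounded neighbourhood of the support plane at $z$.

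To bound the dilatation, I would estimate the local distortion of $\widetilde{\phi}$ at $z$ in terms of the bending measure captured in a Poincar\'e disk $D(z,\nu)$. Uniform perfectness guarantees that such a disk embeds in $\Omega$ and lifts isometrically to $\widetilde{\Omega}$; applying the arc-bending estimates of Epstein-Marden-Markovic \cite{EMM2} and Bridgeman-Canary \cite{BC10} bounds this bending, and hence the dilatation, by an explicit function of $\nu$ which blows up as $\nu \to 0$.

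For the converse, suppose $\Omega$ is not uniformly perfect, so there are points $z_n \in \Omega$ with injectivity radii $\nu_n \to 0$. Near each $z_n$, after normalisation, $\Omega$ contains a conformally embedded round annulus of large modulus, for which the model $\Omega(s_n)$ with $s_n \to \infty$ from the introduction applies. The minimal quasiconformal dilatation of any map $\Omega(s) \to \mathrm{Dome}(\Omega(s))$ homotopic to $r$ grows like $\pi \sinh(s/2)/s$, which tends to infinity with $s$. A bounded homotopy from $\phi$ to $r$ forces the restriction of $\phi$ over these annular neighbourhoods to be a bounded perturbation of the nearest point retraction, so $\phi$ cannot be $K$-quasiconformal for any fixed $K$, a contradiction.

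The hardest part will be in the forward direction: showing that the Epstein-type extension is simultaneously conformally natural, admits a bounded homotopy to $r$, and has dilatation depending only on $\nu$. The Epstein extension is automatically conformally natural and equivariant when the bending data is, but controlling the dilatation requires integrating the bending measure on the scale of $\nu$ while preserving the quantitative bounds from the simply connected case \cite{EM87,EMM2}. The passage from the universal cover to the quotient is what consumes the uniform perfectness hypothesis, and making this quantitative is the technical heart of the argument.
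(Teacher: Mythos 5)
The theorem you are asked to prove is attributed to Marden--Markovic and is cited rather than proved in the paper. However, the paper's own Theorem~\ref{main} reproves the forward implication (with explicit bounds), and Proposition~\ref{lower bound} contains the quantitative content of the converse, so there is a ``paper's proof'' to compare against. Your approach is genuinely different, and there are gaps in it.

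For the forward direction, the paper's construction is entirely different from yours: it normalizes the bending lamination of $\chb$ to roundness one, uses the complex earthquake $\C E_{t_0}$ and Theorem~\ref{holo motion} to produce a quasiconformal map $\Phi_{t_0}:\Hp\to\Omega_0$, then applies the complex angle scaling map $\Psi_k$ (Theorem~\ref{scale}) to pass from $\Omega_0$ to $\Omega$, and finally descends the composition under the covering map $P_\mu$. Your proposal instead invokes an ``Epstein-type extension.'' The difficulty is that the Epstein envelope map associated to the Poincar\'e metric on $\Omega$ does \emph{not} take values in $\chb$; it produces an embedded surface in $\Ht$ that is in general strictly between $\Omega$ and the dome. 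Your description of a construction that ``assigns to each $z\in\widetilde\Omega$ a point on $\chbu$ together with a supporting hyperplane'' has no natural candidate in the literature other than the nearest point retraction itself, which is not quasiconformal (it is not even a homeomorphism away from the smooth locus). So the central object of your forward argument is not well-defined, and the conformal naturality and dilatation estimates that you claim follow from it are therefore unsupported. A workable version of your idea would need to replace the Epstein map by the actual device the paper (following \cite{EMM1,EMM2}) uses: quasiconformal extensions of complex earthquakes obtained via holomorphic motions, together with complex angle scaling to remove the embeddedness restriction $||\mu||<c_2$.

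Your converse direction also has a gap. You argue that $\Omega$ contains round sub-annuli $\Omega(s_n)$ of modulus $s_n/2\pi\to\infty$ and then appeal to the model computation $K(s)=\pi\sinh(s/2)/s$. But for that computation to produce a contradiction you must compare the modulus of an annulus in $\Omega$ with the modulus of the \emph{corresponding} annulus in $\chb$, and these live in two different surfaces, neither of which is literally a round annulus. The crucial input, which your sketch omits, is Canary's length bound (\cite{cbbc}, used in the paper's Proposition~\ref{lower bound}): the geodesic $r(\gamma)^*$ in $\chb$ homotopic to $r(\gamma)$ has length $L'$ that is \emph{exponentially} smaller than the length $L$ of $\gamma$ in $\Omega$. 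Without this, the modulus inequality runs in the unhelpful direction. The paper's Proposition~\ref{lower bound} works by taking the maximal annulus $A'$ in $\chb$ around $r(\gamma)^*$ (modulus $\gtrsim 1/L'$, using Maskit--Sugawa), pulling it back by $\phi^{-1}$ to an annulus in $\Omega$ homotopic to $\gamma$ (modulus $\lesssim \pi/L$), and concluding $K\gtrsim L/L'$, which blows up precisely because $L'\ll L$ exponentially. Merely knowing that $\Omega$ contains a large-modulus annulus is not enough; you need the asymmetric length comparison between $\Omega$ and $\chb$ to close the argument.
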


Marden and Markovic also showed that the nearest point retraction is a quasi-isometry if and only if
$\Omega$ is uniformly perfect. Again, the quasi-isometry constants depend on a lower bound  for the injectivity
radius. However, as their arguments make crucial use of compactness arguments,
Marden and Markovic do not find explicit bounds on the quasiconformal constants  or quasi-isometry
constants in terms of the injectivity radius.

In an earlier paper \cite{BC10}, we showed:

\begin{theorem}{\rm (Bridgeman-Canary \cite{BC10})}
\label{BCqi}
If  $\Omega$ is a uniformly perfect, hyperbolic domain in $\rs$ and $\nu>0$ is a lower bound on
its injectivity radius, then the nearest point retraction is \hbox{$2\sqrt{2}(k+\frac{\pi^2}{2\nu})$}-Lipschitz and 
is a \hbox{$((2\sqrt{2})(k+\frac{\pi^2}{2\nu}),L_0)$}-quasi-isometry,
where \hbox{$k=4+log(3+2\sqrt{2})$} and \hbox{$L_0\approx 7.12$}.
\end{theorem}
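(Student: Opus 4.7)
The plan is to bound $\|dr_z\|$ pointwise on $\Omega$ and then upgrade the resulting Lipschitz bound to a quasi-isometry statement by producing a Lipschitz homotopy inverse in the style of Bridgeman \cite{bridgeman}.

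The first step is the uniform depth estimate
$$d_{\Ht}(z,r(z))\le k+\frac{\pi^2}{2\nu}\qquad\text{for every }z\in\Omega.$$
I would derive this from an Epstein--Marden-type formula $d_{\Ht}(z,r(z)) = \log(\rho_\Omega(z)/\rho_{D_z}(z))+ O(1)$, where $D_z$ is the largest round disk in $\Omega$ containing $z$ and $\rho$ denotes Poincar\'e density. In the simply connected case the ratio is a universal constant and contributes the $k = 4+\log(3+2\sqrt 2)$ term. In the uniformly perfect case, a Pommerenke-type estimate relating the injectivity radius $\nu$ to the modulus of the doubly-connected piece of $\Omega$ separating $z$ from the rest of $\partial \Omega$ gives the density-ratio bound $\rho_\Omega(z)/\rho_{D_z}(z) \le e^{\pi^2/(2\nu)}$, which adds the $\pi^2/(2\nu)$ term.

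Next I would compute the derivative $dr_z$ from the Poincar\'e metric on $\Omega$ to the intrinsic hyperbolic metric on $\chb$. The derivative factors through the horoball depth $d_{\Ht}(z,r(z))$ and the bending geometry of $\chb$ near $r(z)$; feeding the depth estimate into this formula yields the pointwise bound $\|dr_z\| \le 2\sqrt 2\bigl(k+\pi^2/(2\nu)\bigr)$, where the $\sqrt 2$ converts a complex-derivative bound into a real operator-norm bound in two real dimensions. Integrating this along a Poincar\'e geodesic in $\Omega$ produces the global Lipschitz constant. For the quasi-isometry statement, I would then build a Lipschitz homotopy inverse $\psi:\chb\to\Omega$ by pushing each $x\in\chb$ outward along the horoball through $x$ that is tangent to $\rs$ until it meets $\Omega$; this $\psi$ inherits a Lipschitz constant of the same order as $r$, and the composition $\psi\circ r$ displaces points of $\Omega$ by bounded Poincar\'e distance, with the sharp bound $L_0\approx 7.12$ obtained by an explicit optimization over horoball footprints.

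The main obstacle, I expect, will be the density-ratio estimate $\rho_\Omega(z)/\rho_{D_z}(z)\le e^{\pi^2/(2\nu)}$ with the stated constant: it requires converting the injectivity-radius hypothesis into a bound on the modulus of a doubly-connected subdomain around $z$ and applying a Pommerenke-type comparison, and every factor of two has to be tracked to arrive at $\pi^2/(2\nu)$ rather than $\pi^2/\nu$. The secondary obstacle is extracting the sharp additive constant $L_0\approx 7.12$ from the horoball geometry, which involves a bounded but nontrivial optimization over possible positions of $r(z)$ on $\chb$ relative to the fibre of $\psi$.
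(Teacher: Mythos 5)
The paper does not reprove this theorem; it quotes it from \cite{BC10} and, crucially, explains the route: the nearest point retraction is shown to be $1$-Lipschitz (and an $(L,L_0)$-quasi-isometry with $L\approx 8.49$, $L_0\approx 7.12$) when $\Omega$ is given the \emph{Thurston metric}, and the Poincar\'e-metric statement follows by comparing the Thurston and Poincar\'e metrics on a uniformly perfect domain. Your proposal tries to bypass the Thurston metric and work directly with a pointwise derivative bound driven by a ``depth'' estimate. This route, as written, has a genuine scaling error at its core.

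Concretely, the quantity you call the depth is, in substance, the logarithm of the ratio of the Poincar\'e density of the maximal round disk to the Poincar\'e density of $\Omega$, i.e.\ $\log(\rho_{D_z}/\rho_\Omega)$ (note also your ratio is upside down: Schwarz gives $\rho_\Omega\le\rho_{D_z}$, so $\rho_\Omega/\rho_{D_z}\le 1$ always and the bound $\rho_\Omega/\rho_{D_z}\le e^{\pi^2/2\nu}$ is vacuous). But the pointwise operator norm of $dr$ from Poincar\'e to intrinsic metric is controlled by the \emph{ratio} $\rho_{D_z}/\rho_\Omega$ itself, not by its logarithm. This is exactly what the Thurston-metric formulation records: $\|dr_z\|\le \tau_\Omega(z)/\rho_\Omega(z)\approx\rho_{D_z}(z)/\rho_\Omega(z)$. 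For the round annulus $\Omega(s)$, at the core one has $\rho_{D_z}/\rho_\Omega\approx 2s/\pi$, while the Lipschitz constant of $r$ is $\approx s$ and the depth is $\approx\log s$. A linear law ``$\|dr_z\|\lesssim\text{depth}$'' would give $\approx\log s$ (far too small), while feeding your loose depth bound $k+\pi^2/2\nu\approx s/2$ into the true exponential law ``$\|dr_z\|\lesssim e^{\text{depth}}$'' would give $\approx e^{s/2}$ (far too large). Neither reproduces the advertised $2\sqrt{2}(k+\pi^2/2\nu)$. The factor $\sqrt{2}$ is also not a complex-to-real derivative conversion; in two real dimensions the conformal derivative already equals the operator norm, and the $2\sqrt 2$ in \cite{BC03,BC10} arises from a specific geometric estimate, not from linear algebra. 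Finally, the homotopy inverse you sketch (``pushing $x\in\chb$ outward along the horoball through $x$ tangent to $\rs$'') is not well defined at bending lines and is not shown to be Lipschitz; in \cite{BC10} the lower quasi-isometry bound comes for free from the Thurston-metric quasi-isometry result together with the metric comparison, with no separate homotopy-inverse construction needed. To repair your argument you should replace the depth by the density ratio and the linear law by the multiplicative one, at which point you are essentially reconstructing the Thurston-metric argument of \cite{BC10}.
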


Theorem \ref{BCqi}  is obtained as  a corollary of a result which asserts that if
we give {\em any} hyperbolic domain $\Omega$ the Thurston metric, then
the nearest point retraction is 1-Lipschitz and a $(L,L_0)$-quasi-isometry where
$L\approx 8.49$ and $L_0\approx 7.12$. As another corollary of this result, we show
that the nearest point retraction is Lipschitz (with respect to the Poincar\'e metric on $\Omega$)
if and only if $\Omega$ is uniformly
perfect (which was originally conjectured by Marden and Markovic \cite{MM}.)

We also showed that  if $\Omega$ is uniformly perfect, $r$ lifts to a quasi-isometry between
the universal covers of $\Omega$ and $\chb$ (see Theorem \ref{r_lift_qi}).
We combined these results with work of Douady-Earle
\cite{douady-earle} to obtain 
explicit bounds, in terms of $\nu$, on the quasiconformal dilatation of the conformally
natural map between $\Omega$ and $\chb$.
However, our earlier bounds were very far from optimal, 
and the results in this paper provide a substantial improvement.

\section{Background}

In this section, we assemble the ingredients of our proof.
Section \ref{pleated} reviews the theory of pleated planes. Section \ref{npr}
recalls basic properties of the nearest point retraction and uses work of Bridgeman-Canary 
\cite{BC10} to give a bound on the ``roundness'' of the bending lamination of $\chb$.
Section \ref{ce} recalls work of Epstein, Marden, and Markovic \cite{EMM1} on complex earthquakes
which gives rise to a conformally natural quasiconformal map from $\Omega$ to $\chb$ when the roundness
of the bending lamination of $\chb$ is ``small.''
Section \ref{scaling} recalls the complex angle scaling maps used by Epstein, Marden, and Markovic \cite{EMM2}
to associate a family of quasiregular maps to a family of complex earthquakes.

\subsection{Pleated planes}
\label{pleated}

Throughout this paper we will use the disk model for $\Hp$ and identify
$\partial_\infty\Hp$ with $\Sp^1$.
Let $G(\Hp)$ be the set of unoriented geodesics on $\Hp$. 
We can identify
$G(\Hp) = (\Sp^1\times\Sp^1-\Delta)/\Z_2$.
A {\em geodesic lamination}  on $\Hp$ is a closed subset of $G(\Hp)$ such that  distinct geodesics in the subset
do not intersect.
A {\em measured lamination} $\mu$ on $\Hp$ is a non-negative measure on the space of geodesics $G(\Hp)$
supported on a geodesic lamination.
The measure $\mu$ assigns a measure to any arc transverse to the support by restricting $\mu$ to the set of geodesics that intersect the arc. 

In \cite{EMM1}, Epstein, Marden and Markovic defined the {\em roundness} of  a measured lamination $\mu$ to be
$$||\mu|| = \sup_C \mu(C)$$
where the sup is taken over all open geodesic arcs of unit length which are transverse
to $\mu$. 

In general, if $S=\Hp/\Gamma$ is a hyperbolic surface, then a measured lamination $\mu$
on $S$ lifts to a measured lamination $\tilde\mu$ on $\Hp$ and we define
the roundness
$$||\mu||=||\tilde\mu||.$$
Of course, $||\mu||$ can also be defined intrinsically.

Given a measured lamination $\mu$ on $\Hp$, one can define a convex pleated  plane 
$$P_\mu:\Hp \rightarrow \Ht$$
by fixing a component of the complement of $\mu$ and mapping all other components isometrically
after bending along the support of $\mu$ by an amount given by  the transverse measure on $\mu$
(see \cite[Chapter 3]{EM87}). The map $P_\mu$ is well-defined up to post-composition by an isometry.
In the case when $\mu$ is a finite collection of geodesics with discrete measure, $P_\mu$ is defined by mapping each
component of the complement of $\mu$ isometrically so that the the dihedral angle between any two
adjacent components is given by the value of the measure on that geodesic. 
For the general case, one can define $P_\mu$ by considering a sequence
$\{\mu_n\}$ of finite-leaved laminations converging to $\mu$ and letting
$P_\mu$ be the limit of $\{P_{\mu_n}\}$ (see \cite[Theorem 3.11.9]{EM87}).

Epstein, Marden and Markovic \cite{EMM1} showed that the pleated plane
is an embedding when the roundness is small.

\begin{theorem}{\rm (\cite[Theorem 4.2]{EMM1})}
\label{c2embed}
There exists a constant 
$$c_2 > .73$$ 
such that if $\mu$ is a measured lamination on $\Hp$ and
$||\mu|| \leq c_2$, then $P_\mu$ is a bilipschitz embedding. 
\end{theorem}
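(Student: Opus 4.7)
The plan is to first reduce to the finite-leaved case. Since any measured lamination $\mu$ can be approximated by a sequence $\mu_n$ of finite-leaved measured laminations with $\|\mu_n\| \to \|\mu\|$, and since $P_{\mu_n} \to P_\mu$ uniformly on compact sets, a uniform bilipschitz estimate for finite-leaved laminations satisfying $\|\mu_n\| \le c_2$ will pass to the limit. The harder issue is that bilipschitz embeddings are not obviously preserved under limits (the limit could fail to be injective), but since every $P_{\mu_n}$ is $1$-Lipschitz upstairs (being piecewise isometric) and we will obtain a lower Lipschitz constant depending only on $\|\mu_n\|$, injectivity in the limit follows.

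For a finite-leaved lamination $\mu$ with leaves $\ell_1,\dots,\ell_k$ and weights $\theta_1,\dots,\theta_k$, the map $P_\mu$ sends each flat piece in the complement of $\mu$ isometrically to a totally geodesic piece of $\Ht$, and I would study what happens to the image of a hyperbolic geodesic segment $\sigma\subset\Hp$ joining two points $x,y$. Its image $P_\mu(\sigma)$ is a piecewise geodesic in $\Ht$ with breakpoints at the intersections with the leaves and exterior angles $\theta_i$ at those breakpoints. Since $\sigma$ is transverse to $\mu$, the hypothesis $\|\mu\|\le c_2$ guarantees that the sum of exterior angles in any unit-length sub-segment is at most $c_2$. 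The upper Lipschitz estimate is immediate: $P_\mu$ is piecewise isometric so $d_\Ht(P_\mu(x),P_\mu(y))\le d_\Hp(x,y)$.

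The main work is the lower Lipschitz estimate, which amounts to showing that $P_\mu(\sigma)$ is a quasigeodesic in $\Ht$ with constants depending only on $c_2$. I would argue this by a local-to-global principle: using standard comparison in $\Ht$, a piecewise geodesic whose bending per unit length is uniformly bounded by a sufficiently small constant stays within bounded Hausdorff distance of a genuine geodesic, and the ratio of endpoint distance to path length is bounded below. Concretely, one partitions $\sigma$ into pieces of length $1$, on each piece estimates how far $P_\mu$ can ``fold back'' using the hyperbolic law of cosines applied to the polygon spanned by the breakpoints, and then concatenates. The constant $c_2 > .73$ arises from pushing this estimate as far as possible while keeping the folded image convex enough to prevent self-intersection; this is the delicate quantitative step.

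Finally, once the lower bilipschitz constant $L(\|\mu\|) > 0$ is obtained for every finite-leaved $\mu$ with $\|\mu\|\le c_2$, injectivity of $P_\mu$ follows from the bilipschitz property on $\Hp$, and the limiting argument of the first paragraph gives the theorem for general $\mu$. The main obstacle is the explicit trigonometric estimate pinning down the threshold $c_2$; this is where one cannot avoid a careful extremal computation, presumably optimizing over bending configurations that are on the verge of causing $P_\mu(\sigma)$ to double back on itself.
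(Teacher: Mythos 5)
The paper does not supply a proof of this theorem: it is Theorem 4.2 of Epstein--Marden--Markovic \cite{EMM1}, quoted here as a black box, and the only comment the authors add is the Remark that Epstein and Jerrard improved the constant to $c_2 \approx .948$. There is therefore no internal proof to compare your attempt against, and your proposal should be judged against the cited source rather than the present paper.

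With that caveat, your sketch is broadly consistent with the strategy Epstein, Marden, and Markovic actually carry out: approximate by finite-leaved laminations, observe that the restriction of $P_\mu$ to a geodesic $\sigma$ is a piecewise geodesic whose total bending over any unit sub-arc is controlled by $\|\mu\|$, and show that this forces the image to be a quasigeodesic whose constants depend only on $\|\mu\|$; a uniform lower Lipschitz bound then gives global injectivity and passes to the limit. Two points deserve correction or emphasis. First, the exterior angle at a breakpoint is not the weight $\theta_i$ but the smaller angle $\delta_i$ satisfying $\cos\delta_i = \cos^2\alpha_i + \sin^2\alpha_i\cos\theta_i$, where $\alpha_i$ is the angle at which $\sigma$ crosses the leaf; so $\delta_i \le \theta_i$ with equality only for perpendicular crossings. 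This works in your favor, but your sketch states an equality and so misrepresents the geometry. Second, ``stays within bounded Hausdorff distance of a geodesic'' is not enough for a bilipschitz estimate: a $(\lambda,\epsilon)$-quasigeodesic bound with $\epsilon>0$ destroys the lower Lipschitz constant at short scales. The real content of the theorem is the short-scale estimate --- a purely multiplicative lower bound on $d(P_\mu(x),P_\mu(y))/d(x,y)$ for $d(x,y)\le 1$, obtained from the law of cosines and the bending bound --- with the local-to-global (Morse lemma) step only needed to concatenate. Your proposal would need to make this division explicit; as written it reads as if the Morse lemma alone supplies the bilipschitz constant, which it does not.
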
 

\noindent
{\bf Remark:} Epstein and Jerrard (see \cite{BCpre1}) proved that one can take
$c_2\approx .948$ in Theorem \ref{c2embed}.

\subsection{The nearest point retraction and the bending lamination}
\label{npr}

Let $\Omega \subseteq \Sp^2$ be a hyperbolic domain and 
let $\chb$ be the boundary of the convex hull of $\rs-\Omega$.
The nearest point retraction
$$r:\Omega \rightarrow \chb$$ is a conformally natural homotopy equivalence
which extends continuously to a map from $\overline{\Omega}$ to
$\chb\cup\partial\Omega$ which is the identity on $\partial \Omega$.
In the special case that \hbox{$\rs-\Omega$} lies in a round
circle, \hbox{$CH(\rs-\Omega)$} is a subset of a totally geodesic plane in $\Ht$ and one
defines $\chb$ to be the double of \hbox{$CH(\rs-\Omega)$} along
its boundary (as a surface in the plane).
(See Epstein-Marden \cite[Chapter 1]{EM87} for an extensive discussion of the basic
properties of the nearest point retraction.)

Thurston \cite{ThBook}  observed that the intrinsic metric on $\chb$ is a complete hyperbolic metric.
Moreover, he showed that there exists a lamination $\mu$ on $\Hp$ such that
$\chb=P_\mu(\Hp)$ and that $P_\mu:\Hp\to\chb$ is a locally isometric covering
map.
Epstein and Marden \cite[Chapter 1]{EM87} developed
this observation more fully, see especially sections 1.11 and 1.12.

\begin{lemma}
\label{pleated plane and dome}
If $\Omega$ is a hyperbolic domain, there is a lamination $\mu$ on $\Hp$ such that
$P_{\mu}$ is a locally isometric covering
map with image $\chb$.
\end{lemma}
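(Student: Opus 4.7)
The plan is to make explicit the bending structure on $\chb$ and then transfer it to $\Hp$ via the universal cover. First I would recall why $\chb$, with its intrinsic path metric, is a complete hyperbolic surface: every point of $\chb$ lies on a geodesic of $\Ht$ that is contained in $\chb$ (so $\chb$ is a ruled surface in $\Ht$), and locally $\chb$ is obtained by bending a subset of a totally geodesic plane of $\Ht$ along such ``support'' geodesics, which is an isometry of intrinsic metrics; completeness follows because the nearest point retraction $r:\Omega\to\chb$ is proper and $\Omega$ is complete in the Poincar\'e metric. This is the content of Thurston's observation which I would cite, or derive as in \cite{EM87}.

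Second, I would identify the bending lamination $\hat\mu$ on $\chb$. Each point of $\chb$ is either a \emph{flat point}, admitting a neighborhood isometric (via inclusion into $\Ht$) to an open subset of a totally geodesic plane, or a \emph{bending point}, lying on a unique complete geodesic of $\chb$ across which the two flat sides meet at a nontrivial dihedral angle (or is a limit of such). The bending geodesics form a closed subset of $G(\chb)$ whose leaves are disjoint, hence a geodesic lamination on $\chb$; Epstein--Marden show \cite[Ch.~1]{EM87} that the total exterior dihedral angle along a transverse arc defines a locally finite transverse measure, giving a measured lamination $\hat\mu$ on $\chb$.

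Third, since $\chb$ is a complete hyperbolic surface, its universal cover is isometric to $\Hp$; let $\pi:\Hp\to\chb$ be the covering map and set $\mu=\pi^{*}\hat\mu$. To build $P_\mu:\Hp\to\Ht$, I would pick a flat piece $F_0\subset \chb$, lift it to $\tilde F_0\subset\Hp$, and embed $\tilde F_0$ isometrically into $\Ht$ via the inclusion of $F_0$ into $\chb\subset\Ht$. Then, by bending across each lift of a leaf of $\hat\mu$ according to the transverse measure $\mu$, I extend the embedding flat-piece by flat-piece. By construction the resulting map is locally isometric, agrees on overlaps (because the bending recipe used on $\Hp$ is the same one that reconstructs $\chb$ from one of its flat pieces in $\Ht$), and descends through $\pi$ to the inclusion $\chb\hookrightarrow\Ht$; in particular its image is exactly $\chb$. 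By the definition of pleated plane in Section~\ref{pleated} (approximating $\mu$ by discrete-measure laminations $\mu_n$ on $\Hp$ and invoking \cite[Theorem 3.11.9]{EM87}) this map is the pleated plane $P_\mu$.

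The main obstacle is the general (non-discrete) bending case: verifying that the collection of bending geodesics really is closed, that the dihedral angles assemble into a $\sigma$-additive transverse measure, and that the piece-by-piece extension on $\Hp$ converges to a well-defined locally isometric map. I would handle this by approximating $\Omega$ from inside by domains $\Omega_n$ whose complements are finite unions of round disks, so that $\mathrm{Dome}(\Omega_n)$ is a finite-leaved bent surface and the corresponding pleated plane $P_{\mu_n}$ is manifestly a locally isometric covering onto $\mathrm{Dome}(\Omega_n)$; then passing to the limit (using convergence of nearest point retractions and \cite[Theorem 3.11.9]{EM87}) gives the desired $\mu$ and $P_\mu$, with the covering property preserved because local isometry is preserved in the limit and the image exhausts $\chb$.
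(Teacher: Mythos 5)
The paper gives no proof of this lemma, attributing it directly to Thurston \cite{ThBook} and to Epstein--Marden \cite{EM87} (especially sections 1.11 and 1.12); your proposal is a correct reconstruction of that standard development — identify the bending lamination $\hat\mu$ on $\chb$, pull it back to $\Hp$ via the universal cover, build $P_\mu$ flat-piece by flat-piece, and handle non-atomic bending by approximation via \cite[Theorem 3.11.9]{EM87}. Since this is essentially the argument the paper is invoking by citation, it matches the paper's approach.
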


The lamination $\mu$ descends to a lamination $\hat\mu$ on $\chb$ which is called
the  {\em bending lamination} of $\chb$.

In \cite{BC03} the authors obtained a bound on the 
intersection number of a geodesic arc on $\chb$ with the bending lamination.
(We note that the result is stated in \cite{BC03} in the setting of domains of discontinuity of
Kleinian groups, but the proof goes through readily for general hyperbolic domains,
see also \cite{BC10}.)
We define \hbox{$F:[0,2\sinh^{-1}(1)) \rightarrow (0,\infty)$} and 
\hbox{$G:(0,\infty)\to [0,2\sinh^{-1}(1))$} by setting
$$F(x) = \frac{x}{2} + \sinh^{-1}\left(\frac{\sinh(x/2)}{\sqrt{1-\sinh^2(x/2)}}\right) \ \ \ 
{\rm and}\ \ \ G(x) = F^{-1}(x).$$

\begin{prop}{\rm (\cite[Proposition 5.1]{BC03})}
\label{bend bound}
Let $\Omega\subset\rs$ be a hyperbolic domain and let \hbox{$\hat\nu:\chb \rightarrow \R$} be the injectivity radius function on $\chb$.
Let $\mu$ be the bending lamination on $\chb$. 
If $\alpha:[0,1) \rightarrow \chb$ is a geodesic arc of length $l(\alpha) \leq G(\hat\nu(\alpha(0))$ which is
transverse to $\mu$, then
$$ \mu(\alpha) \leq 2\pi$$
\end{prop}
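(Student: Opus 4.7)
My plan is to argue by contradiction. Assume $\mu(\alpha) > 2\pi$ and pass to a sub-arc (still called $\alpha$, of length $l' \leq l(\alpha)$) on which $\mu(\alpha) = 2\pi$, handling atoms by an approximation---note that for a dome's bending lamination each atomic leaf has weight strictly less than $\pi$, since it equals $\pi$ minus the interior dihedral angle at that edge of the convex hull. I would then produce a second preimage $\tilde q \neq \tilde\alpha(0)$ of $\alpha(0)$ in the universal cover $\Hp$ of $\chb$, via the pleated plane $P : \Hp \to \chb$ from Lemma \ref{pleated plane and dome}, at hyperbolic distance at most $F(l')$ from the distinguished lift $\tilde\alpha(0)$. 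Since $F$ is increasing and $F(l(\alpha)) \leq \hat\nu(\alpha(0))$ by hypothesis, the existence of such a $\tilde q$ within that distance of $\tilde\alpha(0)$ contradicts the definition of the injectivity radius at $\alpha(0)$.

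The lifted arc $\tilde\alpha \subset \Hp$ develops under $P$ into $\Ht$ as a piecewise geodesic passing through a sequence of totally geodesic planes $\Pi_t \subset \Ht$---the flat pieces of $\chb$ containing $P(\tilde\alpha(t))$---with $\Pi_t$ rotated by $\theta$ about the leaf axis whenever $\tilde\alpha$ crosses a leaf of weight $\theta$ in the lifted measured lamination. The geometric heart of the argument is to exhibit, using the $2\pi$ of accumulated bending, a short path $\beta \subset \chb$ from $\alpha(l')$ back to $\alpha(0)$. Lifting the loop $\alpha \cup \beta$ to $\Hp$ via $P$ produces a path from $\tilde\alpha(0)$ terminating at the desired second preimage $\tilde q$, with $d_{\Hp}(\tilde\alpha(0), \tilde q) \leq l' + l(\tilde\beta)$.

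Constructing $\beta$ and controlling its length is the main obstacle. The strategy is an unfolding analysis of the pleated configuration: one tracks the extremal positions of the initial and terminal flat pieces in $\Ht$ and argues that the $2\pi$ of accumulated turning yields enough return for a short geodesic closure. Since rotations about distinct axes in $\Ht$ do not commute, the estimate cannot be made additive, and must hold uniformly across concentrated, continuous, and mixed bending distributions---so one has to identify the worst-case distribution of the $2\pi$ along $\tilde\alpha$.

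The required estimate is $l' + l(\tilde\beta) \leq F(l')$, with \hbox{$F(x) = x/2 + \sinh^{-1}(\sinh(x/2)/\sqrt{1-\sinh^2(x/2)})$}. The explicit form of $F$ strongly suggests the extremizer is a right-triangle computation in a single totally geodesic plane, with one leg of length $x/2$ (half of $\tilde\alpha$, arising from a midpoint symmetry of the extremal configuration) and the second term representing the perpendicular displacement via a Lobachevsky/parallel-angle identity. Establishing this extremizer---and verifying that every other bending distribution summing to $2\pi$ gives a shorter closure $\beta$---is the delicate step I expect to be the principal difficulty.
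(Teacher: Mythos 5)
Your high-level plan---assume $\mu(\alpha)>2\pi$, truncate at total bending $2\pi$, produce a short return path $\beta\subset\chb$ from $\alpha(l')$ back to $\alpha(0)$, and lift the resulting loop to find a second preimage $\tilde q$ of $\alpha(0)$ contradicting the injectivity radius---is a reasonable outline, and since injectivity radius can only be bounded above by exhibiting a short essential loop, something of this shape is essentially forced. (Note that the paper at hand merely cites this result from \cite{BC03}, so the proof to compare against lives there.) But what you have written stops precisely where the proof has to begin. The entire content of the proposition is the quantitative claim that $2\pi$ of bending along an arc of length $l'$ forces a closure $\beta$ with $l'+l(\beta)\le F(l')$ for the specific $F$; you describe this as an ``unfolding analysis,'' observe that rotations in $\Ht$ about distinct axes fail to commute so the estimate is not additive, and then reverse-engineer $F=x/2+\tanh^{-1}(\sinh(x/2))$ into a conjectured right-triangle extremizer. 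None of this is established; you yourself flag it as ``the principal difficulty'' and then do not do it, so the proposal is a plan rather than a proof.

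Two further gaps. First, you assert $\tilde q\ne\tilde\alpha(0)$---i.e., that the loop $\alpha|_{[0,l']}\cup\beta$ is essential---without argument. This is not automatic from merely having a loop; it would have to come out of the same unfolding analysis (e.g., by showing the concatenation has no backtracking corner, so its lift is a genuine path with distinct endpoints), and it needs to be proved, not just claimed. Second, check your constants against the injectivity-radius convention: distinct lifts of $p$ are separated by at least $2\hat\nu(p)$, whereas you claim $d(\tilde\alpha(0),\tilde q)\le F(l')\le\hat\nu(\alpha(0))$, which is a factor of two stronger than needed for the contradiction. That surplus suggests the bound you are aiming for is over-ambitious and that the geometry actually yields something closer to $2F(l')$; the form of $F$ is calibrated to whatever the true configuration is, so guessing it from midpoint symmetry alone is risky.
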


Proposition \ref{bend bound} may be re-interpreted to give an upper bound on the  roundness of
the bending lamination of $\chb$ when $\Omega$ is  uniformly perfect.

\begin{prop}{}
\label{norm bound}
Let $\Omega\subset\rs$ be uniformly perfect  hyperbolic domain and let  $\hat\nu>0$ be a lower bound
on the injectivity radius of $\chb$.
If $\mu$ is the bending lamination of $\chb$, then
$$||\mu|| \leq    2\pi \left\lceil{\frac{1}{G(\hat\nu)}}\right\rceil \le 
\frac{4\pi}{\hat{\nu}} +2\pi$$
where $\lceil x \rceil$ is the least integer greater than or equal to $x$.
 \end{prop}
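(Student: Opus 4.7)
The plan is to bound the roundness by splitting any unit-length transverse arc into pieces short enough to apply Proposition \ref{bend bound}, and then do a small calculus computation to convert the resulting combinatorial bound into the clean estimate $4\pi/\hat\nu+2\pi$.

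First, I would let $\alpha$ be any open geodesic arc of length $1$ on $\chb$ which is transverse to the bending lamination $\mu$, and set $k=\lceil 1/G(\hat\nu)\rceil$. Partition $\alpha$ into $k$ consecutive subarcs $\alpha_1,\dots,\alpha_k$, each of length $1/k\le G(\hat\nu)$. The point is that Proposition \ref{bend bound} applies to each $\alpha_i$ individually: since $\hat\nu$ is a global lower bound on the injectivity radius of $\chb$, we have $\hat\nu(\alpha_i(0))\ge\hat\nu$, and because $F$ is strictly increasing on its domain (its derivative is manifestly positive), its inverse $G$ is increasing as well, giving
$$\mathrm{length}(\alpha_i)\le G(\hat\nu)\le G(\hat\nu(\alpha_i(0))).$$
Hence Proposition \ref{bend bound} yields $\mu(\alpha_i)\le 2\pi$ for each $i$, and summing gives $\mu(\alpha)\le 2\pi k$. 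Taking the supremum over all unit-length transverse arcs produces the first inequality
$$\|\mu\|\le 2\pi\left\lceil\frac{1}{G(\hat\nu)}\right\rceil.$$

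For the cleaner bound $\tfrac{4\pi}{\hat\nu}+2\pi$, I would use $\lceil x\rceil\le x+1$, reducing everything to showing that $G(\hat\nu)\ge \hat\nu/2$ whenever $G(\hat\nu)<1$ (when $G(\hat\nu)\ge 1$ the ceiling is $1$ and the bound is immediate from $2\pi\le 4\pi/\hat\nu+2\pi$). Setting $y=G(\hat\nu)$, this is equivalent to the inequality $F(y)\le 2y$ on the range of $y$ in question; since $G(\hat\nu)<1$ forces $\hat\nu<F(1)$, it suffices to check $F(y)\le 2y$ on $y\in(0,1]$. This is a one-variable calculus check: $F(y)/y\to 1$ as $y\to 0$, $F(1)\approx 1.08$, and a direct derivative computation shows $F(y)/y$ is monotone increasing, so the inequality holds throughout $(0,1]$. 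Combining the two cases gives $2\pi\lceil 1/G(\hat\nu)\rceil \le 2\pi/G(\hat\nu)+2\pi\le 4\pi/\hat\nu+2\pi$.

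The main obstacle is really just the second step, verifying $G(\hat\nu)\ge \hat\nu/2$ in the small-$\hat\nu$ regime; everything else is a direct invocation of Proposition \ref{bend bound} together with monotonicity of $G$ and a pigeonhole subdivision of the unit arc. The first (conceptual) part of the argument is robust and would give some estimate of the form $C/\hat\nu+C'$ for any reasonable lower bound on $G$ near zero, so the content of the stated constants is precisely the inequality $G(x)\ge x/2$ on the appropriate range.
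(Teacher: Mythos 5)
Your proof is correct and follows essentially the same route as the paper's: subdivide a unit transverse arc into $\lceil 1/G(\hat\nu)\rceil$ pieces of length at most $G(\hat\nu)$, apply Proposition~\ref{bend bound} to each, sum, and then pass to the cleaner bound via $\lceil x\rceil\le x+1$ together with $G(\hat\nu)\ge\hat\nu/2$ in the regime $G(\hat\nu)<1$. The only difference is that you supply a calculus justification for $F(y)\le 2y$ on $(0,1]$, whereas the paper simply asserts $G(\hat\nu)\ge\hat\nu/2$; both handle the case $G(\hat\nu)\ge 1$ identically.
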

 
\begin{proof} 
Since the function $G$ is increasing, 
Proposition \ref{bend bound}, implies that if $\alpha$ is a half open geodesic arc of length 
at most $G(\hat\nu)$ which is transverse to $\mu$, 
then $i(\mu,\alpha) \leq 2\pi$.  
Let \hbox{$\alpha:[0,1) \rightarrow \chb$} be a unit length geodesic arc  which is transverse to $\mu$.
We can decompose $\alpha$ into \hbox{$\lceil{1/G(\hat\nu)}\rceil$} intervals of length at most $G(\hat\nu)$.
By summing,  we see that
$$i(\alpha,\mu)\le 2\pi \left\lceil{\frac{1}{G(\hat\nu)}}\right\rceil.$$
Since $\alpha$  was arbitrary, we conclude that
$$||\mu||\le  2\pi \left\lceil{\frac{1}{G(\hat\nu)}}\right\rceil \le \frac{2\pi}{ G(\hat\nu)} +2\pi.$$

If $G(\hat \nu)\ge 1$, the final inequality follows immediately.
If \hbox{$G(\hat\nu)<1$,} then \hbox{$G(\hat\nu)\ge \hat\nu/2$},
and the final inequality follows from the fact that  
\hbox{$||\mu||\le   \frac{2\pi}{ G(\hat\nu)} +2\pi.$}
\end{proof}

In \cite{BC03} we showed that a lower bound on the injectivity radius of $\Omega$ implies an
explicit lower bound on the injectivity radius of $\chb$.

\begin{prop}
\label{inj radii}
{\rm (\cite[Lemma 8.1]{BC03})}
If $\Omega\subset\rs$ is a uniformly perfect domain and $\nu$ is a lower bound for its injectivity radius,
then $g(\nu)$ is a lower bound for the injectivity radius of $\chb$, where
$$g(\nu) = \frac{e^{-m}e^{-\frac{\pi^2}{2\nu}}}{2}\ \ \  {\rm and}\ \ \  m = \cosh^{-1}(e^2).$$ 
\end{prop}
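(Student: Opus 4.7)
The plan is to prove the contrapositive: if $\chb$ has intrinsic injectivity radius less than $g(\nu)$ at some point $z$, I will exhibit a closed essential loop in $\Omega$ of Poincar\'e length less than $2\nu$, contradicting uniform perfection. Let $\gamma$ be a geodesic loop at $z$ in $\chb$ of length $2\hat\nu(z)<2g(\nu)$; via the homotopy equivalence $r:\Omega\to\chb$, its class corresponds to a non-trivial element of $\pi_1(\Omega)$, and the task reduces to producing a loop in that class of Poincar\'e length strictly less than $2\nu$.

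I would build such a loop using the \emph{supporting-disk structure} inherited from the pleated plane. For each complementary region $F$ of the bending lamination $\mu$ on $\chb$, the supporting hyperbolic plane in $\Ht$ containing $F$ has its circle at infinity bounding a round disk $D_F\subset\Omega$, and the nearest point retraction $r$ restricts to an isometry $D_F\to F$ between the Poincar\'e metric on $D_F$ and the induced hyperbolic metric on $F$. Following $\gamma$ through the complementary regions $F_1,\ldots,F_n$ it visits yields a corresponding sequence of disks $D_1,\ldots,D_n\subset\Omega$; piecing together the preimages $r^{-1}(\gamma\cap F_i)$ inside each $D_i$, together with short Poincar\'e ``bridges'' across each bending leaf where $\gamma$ jumps from $D_i$ to $D_{i+1}$, produces a closed curve $\alpha\subset\Omega$ in the correct homotopy class. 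Its Poincar\'e length is at most the length of $\gamma$ plus the total bridge length, which is controlled by the bending measure of $\gamma$ and by the geometry of the supporting disks at each transition.

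The hardest part is the quantitative estimate, which must yield a comparison of the form $L_{\Omega}(\alpha)\le 2e^me^{\pi^2/(2\nu)}\cdot L_{\chb}(\gamma)$. The exponential factor $e^{\pi^2/(2\nu)}$ is forced by the annular model of Section 5: a round annulus of minimal Poincar\'e injectivity radius $\nu$ has a dome whose core geodesic has length of order $2\pi e^{-\pi^2/(2\nu)}$, so no improvement on this asymptotic order is possible. I would first verify the bound in this annular model by direct calculation, and then reduce the general case to it by isolating the Margulis collar of $\gamma$ inside $\chb$ and applying the supporting-disk construction within the collar. The universal factor $e^{-m}$ with $m=\cosh^{-1}(e^2)$ is expected to arise from a Koebe-type distortion estimate for the Poincar\'e versus Euclidean geometry of a supporting disk at a fixed hyperbolic scale; calibrating this constant precisely, rather than just getting the right asymptotic order, is the main technical challenge.
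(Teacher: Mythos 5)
The statement you are proving is stated in the paper only as a citation to \cite[Lemma 8.1]{BC03}; the present paper contains no proof of its own. So the comparison must be made against what one would need for a self-contained argument, and against the general strategy of \cite{BC03}, which proceeds by relating the Euclidean geometry of separating annuli in $\Omega$ to the height of $\chb$ in the upper half-space model. Your proposal takes a genuinely different route, and unfortunately I see several concrete gaps in it.

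First, your target inequality is not strong enough. You say the quantitative step "must yield" $L_{\Omega}(\alpha)\le 2e^{m}e^{\pi^2/(2\nu)}\cdot L_{\chb}(\gamma)$. Combining this with the hypothesis $L_{\chb}(\gamma) < 2g(\nu) = e^{-m}e^{-\pi^2/(2\nu)}$ gives only $L_{\Omega}(\alpha) < 2$, whereas the contrapositive requires $L_{\Omega}(\alpha) < 2\nu$. For the regime of interest ($\nu$ small) this is off by a factor of $\nu$: to conclude via a multiplicative bound you need $L_\Omega(\alpha)\le 2\nu e^{m}e^{\pi^2/(2\nu)} L_{\chb}(\gamma)$. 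This is not a rounding issue; the missing factor of $\nu$ is exactly what makes the round annulus computation tight, since there $L_\Omega/L_{\chb} = \pi\sinh(s/2)/s \approx \nu e^{\pi^2/(2\nu)}/(2\pi)$, which does carry a factor of $\nu$.

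Second, and more seriously, the supporting-disk-plus-bridges construction degenerates precisely in the model you invoke. For the round annulus $\Omega(s)$ the bending lamination of $\chb$ is a foliation by radial geodesics with absolutely continuous transverse measure: there are no two-dimensional gaps $F$, every supporting disk $D_F$ meets $r^{-1}(\gamma)$ in a single point, and the crescents over atomic leaves (your ``bridges'') are all degenerate. The entire length of the preimage curve in $\Omega$ is invisible to the decomposition into ``pieces in disks'' plus ``bridges across atoms.'' Since the annulus is supposed to be your extremal model, this is not an edge case you can wave away; any correct version of the argument must handle non-atomic bending, presumably by a limiting argument over finite-leaved approximations with uniform control, which you do not address.

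Third, even in the purely atomic case, the natural estimate coming from your construction is additive, $L_\Omega(\alpha)\le L_{\chb}(\gamma) + (\text{total bridge length})$, and it is not explained how this is converted into the multiplicative exponential comparison you need. You gesture at a Koebe-type distortion estimate and at isolating the Margulis collar of $\gamma$ in $\chb$, but the collar of a geodesic of length $L'$ has depth of order $\log(1/L')$, not $e^{c/L'}$, so the collar alone cannot produce the exponential factor. What actually produces the exponential in the annulus computation is the conformal modulus: a short dome geodesic of length $L'$ forces $\mathrm{mod}(\chb$-collar$)\approx \pi/L'$, and the transfer of modulus information back to $\Omega$ is where the hyperbolic-to-Euclidean comparison enters. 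Your outline does not perform this transfer, and without it the exponential does not appear. A minor but telling imprecision along the way: $r$ restricted to the whole supporting disk $D_F$ is \emph{not} an isometry onto the gap $F$; only the subset $r^{-1}(F)\subsetneq D_F$ is carried isometrically to $F$ (in the Poincar\'e metric of $D_F$, which dominates that of $\Omega$), and points of $D_F$ near $\partial D_F$ retract to neighbouring pieces of the dome, not to $F$.
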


Combining Propositions \ref{norm bound} and \ref{inj radii} we immediately obtain:

 \begin{corollary}
 \label{norm bound 2}
Let $\Omega$ be a uniformly perfect  hyperbolic domain and let  $\nu>0$ be a lower bound on the injectivity radius 
of $\Omega$.
If $\mu$ is the bending lamination of $\chb$, then
$$||\mu||  \leq 8\pi e^m e^{\frac{\pi^2}{2\nu}} +2\pi
\le 370 e^{\frac{\pi^2}{2\nu}} +2\pi.$$
 \end{corollary}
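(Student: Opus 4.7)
The plan is straightforward: this corollary is essentially a mechanical substitution, chaining Proposition \ref{inj radii} into Proposition \ref{norm bound}. There is no genuine obstacle; the only thing to be careful about is the numerical constant and verifying the final weak inequality.

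First, I would apply Proposition \ref{inj radii} to convert the hypothesis on the injectivity radius of $\Omega$ into a lower bound on the injectivity radius $\hat{\nu}$ of $\chb$. Explicitly, one gets
$$\hat{\nu} \ge g(\nu) = \tfrac{1}{2}\, e^{-m} e^{-\pi^2/(2\nu)}.$$

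Next, I would apply Proposition \ref{norm bound} to the bending lamination $\mu$ of $\chb$, using $g(\nu)$ as the admissible lower bound on the injectivity radius of $\chb$. This yields
$$\|\mu\| \;\le\; \frac{4\pi}{g(\nu)} + 2\pi \;=\; 8\pi\, e^m e^{\pi^2/(2\nu)} + 2\pi,$$
which is the first claimed inequality.

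For the second inequality, I would just plug in the numerical estimate $m = \cosh^{-1}(e^2) \approx 2.689$, so $e^m \approx 14.72$ and hence $8\pi e^m \le 370$. This gives
$$\|\mu\| \;\le\; 370\, e^{\pi^2/(2\nu)} + 2\pi,$$
completing the proof. The entire argument is one paragraph of substitution; the content is already packaged in the two cited propositions, so there is nothing conceptually delicate to overcome.
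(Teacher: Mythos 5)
Your proof is correct and is exactly the approach the paper takes: the paper states the corollary follows "immediately" by combining Propositions \ref{norm bound} and \ref{inj radii}, and your substitution $\hat\nu \ge g(\nu)$ into the bound $\|\mu\| \le \frac{4\pi}{\hat\nu} + 2\pi$, together with the numerical check $8\pi e^m \approx 369.7 < 370$, is precisely that chaining made explicit.
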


In \cite{BC10} the authors established that, if $\Omega$ is uniformly perfect, the nearest point retraction lifts
to a quasi-isometry
between the universal cover $\tilde \Omega$ of $\Omega$ and the
universal cover $\chbu$ of  $\chb$. 

\begin{theorem}{\rm (\cite[Theorem 1.5]{BC10})}
Suppose that $\Omega$ is a uniformly perfect hyperbolic domain and
$\nu>0$ is a lower bound for its injectivity radius in the Poincar\'e metric.
Then the nearest point retraction lifts to a quasi-isometry
$$\tilde r:\tilde\Omega \to \chbu$$
between the universal cover of $\Omega$ and the universal cover
of $\chb$ where the quasi-isometry constants depend only on $\nu$.
\label{r_lift_qi}
\end{theorem}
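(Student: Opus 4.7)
The plan is to promote the downstairs quasi-isometry of Theorem \ref{BCqi} to the universal covers via a standard lifting principle: if a homotopy equivalence $f : X \to Y$ of Riemannian manifolds is Lipschitz, admits a Lipschitz homotopy inverse $s$, and the homotopies relating $s\circ f$ and $f\circ s$ to the identities have uniformly bounded tracks, then any lift $\tilde f$ to the universal covers (with the lifted Riemannian metrics) is a quasi-isometry, with constants determined by those three ingredients.

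I would first assemble the three ingredients in a way that makes each constant explicit in $\nu$. Theorem \ref{BCqi} already supplies that $r:\Omega\to\chb$ is $L$-Lipschitz with $L=2\sqrt{2}(k+\pi^2/(2\nu))$; since the covering projections $\tilde\Omega\to\Omega$ and $\chbu\to\chb$ are local isometries in the lifted Poincar\'e and intrinsic metrics, any lift $\tilde r$ is automatically $L$-Lipschitz. The $P(\hat\nu)$-Lipschitz homotopy inverse produced in \cite{BC03}, combined with the injectivity radius estimate of Proposition \ref{inj radii}, gives a homotopy inverse $s:\chb\to\Omega$ of Lipschitz constant $L'=P(g(\nu))$. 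Finally, the Marden--Markovic bounded homotopy theorem together with these Lipschitz bounds produces homotopies $H:s\circ r\simeq \mathrm{id}_\Omega$ and $H':r\circ s\simeq \mathrm{id}_{\chb}$ whose tracks have length at most some $B=B(\nu)$.

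With these in hand I would carry out the lifting step. Fix a choice of $\tilde r$ and define $\tilde s$ by path-lifting $H$: the unique lift $\tilde H$ of $H$ starting at $\mathrm{id}_{\tilde\Omega}$ ends at a map that I declare to be $\tilde s\circ\tilde r$, and the analogous prescription for $H'$ pins down $\tilde s$ on the other side. Because the covering projections are local isometries, tracks of $\tilde H$ and of the corresponding lifted homotopy on $\chbu$ have the same length as those of $H$ and $H'$, so $d_{\tilde\Omega}(\tilde s\circ\tilde r(\tilde x),\tilde x)\le B$ for every $\tilde x\in\tilde\Omega$ and $d_{\chbu}(\tilde r\circ\tilde s(\tilde z),\tilde z)\le B$ for every $\tilde z\in\chbu$. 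A triangle inequality now yields
$$d_{\tilde\Omega}(\tilde x,\tilde y)\le 2B + L'\cdot d_{\chbu}\bigl(\tilde r(\tilde x),\tilde r(\tilde y)\bigr),$$
which combined with the $L$-Lipschitz bound exhibits $\tilde r$ as a quasi-isometric embedding, and the near-identity estimate for $\tilde r\circ\tilde s$ supplies coarse surjectivity.

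The main obstacle is the equivariance bookkeeping hidden in the step above. One must be sure that the lift $\tilde s$ produced by the homotopy recipe is genuinely the lift covering $s$ with the correct equivariance $(r_*)^{-1}$; otherwise $\tilde s\circ\tilde r$ would differ from $\mathrm{id}_{\tilde\Omega}$ by a nontrivial deck transformation, and no pointwise bound by $B$ could possibly hold. Defining $\tilde s$ via the canonical starting-point lift of $H$ as above forces the correct equivariance automatically, since $\tilde H$ begins at $\mathrm{id}_{\tilde\Omega}$, which intertwines the two deck actions, and equivariance is preserved along the lifted homotopy.
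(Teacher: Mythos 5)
The statement you are proving is cited in the paper from \cite[Theorem~1.5]{BC10}; the paper gives no proof of it here, so there is no in-paper argument to compare against. Evaluating your proposal on its own terms, the high-level lifting scheme (Lipschitz map, Lipschitz homotopy inverse, bounded homotopies, lift everything) is a sound template and the triangle-inequality conclusion is correct once the three ingredients are in hand. The Lipschitz constant for $\tilde r$ lifting from Theorem~\ref{BCqi} is fine, and the Lipschitz homotopy inverse from \cite{BC03} combined with Proposition~\ref{inj radii} is also fine (the paper itself notes that the \cite{BC03} results, though stated for domains of discontinuity, extend to general hyperbolic domains).

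The genuine gap is in the third ingredient. Your argument hinges on the downstairs homotopies $H\colon s\circ r\simeq\mathrm{id}_\Omega$ and $H'\colon r\circ s\simeq\mathrm{id}_{\chb}$ having tracks of bounded \emph{length}, so that the equal-length lifted tracks give $d_{\tilde\Omega}(\tilde s\tilde r(\tilde x),\tilde x)\le B$. But ``bounded homotopy'' in the Marden--Markovic theorem (and in the \cite{FM07} sense used throughout this paper) means tracks of bounded \emph{diameter}, and a covering projection is a local isometry for path \emph{length}, not for ambient distance. A track of small diameter but long length can lift to a path whose endpoints are very far apart in $\tilde\Omega$ (one can manufacture such homotopies on an annulus by letting tracks wind around the core curve), so the inequality $d_{\tilde\Omega}(\tilde s\tilde r(\tilde x),\tilde x)\le B$ does not follow from what Marden--Markovic actually provides. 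You would need either to produce a homotopy whose tracks have bounded length (e.g. by a straightening argument, which itself requires control relative to injectivity radius, or by working upstairs from the start and invoking the bounded-straight-line-homotopy fact from \cite[Proposition~4.3.1]{FM07} applied to quasi-isometries of $\Hp$ that agree on $\Sp^1$), or to argue directly that the chosen lift $\tilde s\tilde r$ is close to the identity without routing through a downstairs homotopy. There is also a secondary concern of logical economy: \cite{BC10} is a paper built to avoid the compactness arguments of Marden--Markovic, and this lemma feeds into the proof of the main theorem which is precisely an improvement of Marden--Markovic, so leaning on Marden--Markovic here, while not a literal circularity, is not in the spirit of the sourced result and would not deliver constants with any explicit dependence on $\nu$.
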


\subsection{Complex earthquakes}
\label{ce}

In this section, we recall the definition of a complex earthquake associated to a lamination (originally
introduced by Epstein-Marden \cite{EM87}.)  We then describe work of Epstein, Marden and Markovic \cite{EMM2} 
which uses the theory of
holomorphic motions  to  associate a quasiconformal map to  a complex earthquake along a lamination with ``small roundness.'' As a consequence, one obtains a conformally natural 
quasiconformal map  from
$\Omega$ to $\chb$ when the bending lamination of $\chb$ has ``small roundness.''

If $\mu$ is a measured lamination on $\Hp$,  one defines the earthquake map $E_{\mu}: \Hp \rightarrow \Hp$ 
by fixing a component of the complement of $\mu$ and (left-)shearing all other components by an amount given by the measure on $\mu$
(see Thurston \cite{thurston-earthquakes} or Epstein-Marden \cite{EM87} for careful discussions).
An earthquake map $E_\mu$ is continuous  except on leaves of $\mu$ with discrete measure 
(see \cite[Section II.3.6]{EMM2}) and extends to a homeomorphism of $S^1=\partial\Hp$
(see \cite[Proposition III.1.2.6]{thurston-earthquakes}). It follows that
a measured lamination $\lambda$ on $\Hp$ is mapped to a well-defined measured lamination
$E_{\mu}(\lambda)$. 

Given a measured lamination $\mu$ and $z=x+iy\in \C$, we  define the {\em complex earthquake}
$$\C E_z: \Hp \rightarrow \Ht$$ to be the result of first 
earthquaking along $x\mu$ and then bending along $yE_{x\mu}(\mu)$
(see Epstein-Marden \cite{EM87} for a complete discussion and Epstein-Marden-Markovic
\cite[Section 4]{EMM1} for a discussion in this notation).
Formally, 
$$\C E_{z} = P_{yE_{x\mu}(\mu)}\circ E_{x\mu}.$$

In \cite{EMM1}, Epstein, Marden and Markovic  find an open neighborhood $\mathcal{T}_0$ of the origin such
that if $t\in\mathcal{T}_0$,  the complex earthquake map 
$\C E_t$ extends  continuously to $\partial_\infty \Hp = \Sp^1$ to define a map $\phi_t:\Sp^1\to\rs$,
such that $\phi_t(\Sp^1)$ is a Jordan curve bounding a domain $\Omega_t$. They also produce 
a quasiconformal map $\Phi_t:\Hp\to\Omega_t$.

Let
$$ f(L,x) = \min(\sinh^{-1}(e^{|x|}\sinh{L}), e^{|x|/2}\sinh{L})$$
and define a domain ${\mathcal T}_0\subseteq \C$ by
$${\mathcal T}_0 = \left\{ x+iy \ \left| \ |y| < \frac{c_2}{{\lceil f(1,x)\rceil}}\right\}\right.  .$$

\begin{theorem}{\rm (\cite[Theorem 4.14]{EMM1})}
Let $\mu$ be a measured lamination on $\Hp$ such that $||\mu|| = 1$.
Then, for $t \in {\mathcal T}_0$, we have
\begin{enumerate}
\item
$\C E_t$ extends to an embedding $\phi_t:\Sp^1\to \rs$ which bounds a region $\Omega_t$. 
\item
If $t=x+iy$ and $y>0$, the bending measure of $\chbt$ is $yE_{x\mu}(\mu) $.
\item 
There is a  quasiconformal map
$\Phi_t :\Hp \rightarrow \Omega_t$  with dilatation $K_t$ given  by 
$$K_t \leq \frac{1 + |h(t)|}{1-|h(t)|}$$
where $h:{\mathcal T}_0 \rightarrow \Hp$ is a Riemann map taking $0$ to $0$.

In particular, $K_t\le 2$ if $|t|<{1\over 3}$.
\item 
$\Phi_t\cup\phi_t:\Hp\cup \Sp^1\to \rs$ is continuous.
\item If $G$ is a group of M\"obius transformations preserving $\mu$, then $\Phi_t$ can be chosen so that there is a homomorphism $\rho_t:G \to G_t$ where 
$G_t$ is also a group of M\"obius transformations and 
$$\Phi_t \circ g = \rho_t(g)\circ \Phi_t$$
for all $g\in G$.

\end{enumerate}
\label{holo motion}
\end{theorem}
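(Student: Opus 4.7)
The plan is to realize $\{\phi_t\}_{t\in\mathcal{T}_0}$ as a holomorphic motion of the circle $\Sp^1$ based at $t=0$ and then invoke Slodkowski's extension theorem to produce the quasiconformal map $\Phi_t$. The three ingredients one needs are (a) a roundness estimate showing that the pleating lamination is small enough for Theorem \ref{c2embed} to apply, (b) holomorphic dependence of $\phi_t$ on the parameter $t$, and (c) equivariance, which is built in from the start by naturality of the construction.

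For item (1), the boundary embedding claim: when $t=x$ is real, $\C E_x$ is a pure earthquake which, following Thurston, extends to a homeomorphism of $\Sp^1$. For general $t=x+iy\in\mathcal{T}_0$ I need the pleated plane $P_{yE_{x\mu}(\mu)}$ to be a bilipschitz embedding, and by Theorem \ref{c2embed} it suffices to show $\|yE_{x\mu}(\mu)\| < c_2$. The function $f(1,x)$ appearing in the definition of $\mathcal{T}_0$ bounds how much an earthquake of displacement $x$ can stretch a unit transverse arc, so $\|E_{x\mu}(\mu)\|\le\lceil f(1,x)\rceil$; multiplying by $|y|$ and using the definition of $\mathcal{T}_0$ delivers the required bound. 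Composing the shear with the pleated plane, $\C E_t$ embeds into $\Ht$ and its boundary values yield an embedded Jordan curve $\phi_t(\Sp^1)$ bounding $\Omega_t$. Item (2) is then immediate from the definition $\C E_t = P_{yE_{x\mu}(\mu)}\circ E_{x\mu}$, since by construction the pleating locus of the right-hand factor is $yE_{x\mu}(\mu)$.

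I next need holomorphic dependence on $t$. For finite-leaved $\mu$ this is elementary: shearing and bending along finitely many geodesics are finite compositions of M\"obius transformations whose parameters depend holomorphically on $t$. For general $\mu$, I would approximate by finite-leaved laminations $\mu_n\to\mu$, use continuity of the pleated plane construction (\cite[Theorem 3.11.9]{EM87}) together with the uniform roundness bound $\|yE_{x\mu_n}(\mu_n)\|<c_2$ along the approximating sequence, and transfer holomorphicity to the limit via a normal-families argument applied to $t\mapsto\phi_t(z)$ for each fixed $z\in\Sp^1$. Once $\{\phi_t\}$ is a holomorphic motion of $\Sp^1$ based at $t=0$, Slodkowski's theorem produces a holomorphic motion of all of $\rs$, restriction of which gives the quasiconformal $\Phi_t$ of item (3). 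The dilatation estimate follows from the $\lambda$-lemma applied to $t\mapsto \phi_{h^{-1}(t)}$ for a Riemann map $h:\mathcal{T}_0\to\Hp$ with $h(0)=0$, and continuity of $\Phi_t\cup\phi_t$ up to $\Sp^1$ in item (4) is part of the conclusion of Slodkowski's extension.

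For item (5), if $G$ is a M\"obius group preserving $\mu$, the earthquake $E_{x\mu}$ intertwines $G$ with a conjugate group $G_x$, and $P_{yE_{x\mu}(\mu)}$ then intertwines $G_x$ with its bent conjugate $G_t$; composition defines the desired homomorphism $\rho_t$. Choosing the canonical equivariant (Douady--Earle or $G$-invariant Slodkowski) extension preserves this conjugation when passing to $\Phi_t$. The chief technical obstacle in the whole scheme is establishing holomorphic dependence of $\phi_t$ on $t$ for a non-finite measured lamination: uniform control of the approximating sequence $\mu_n$, continuity of $P_{\mu_n}$ in $\mu_n$, and a Montel-style passage to the limit are the technical heart of the argument. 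Once this is in hand, everything else reduces to clean applications of Theorem \ref{c2embed}, \cite[Theorem 3.11.9]{EM87}, and Slodkowski's extension theorem.
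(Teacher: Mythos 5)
This theorem is quoted as \cite[Theorem 4.14]{EMM1}; the present paper offers no proof of it, only the citation, so there is no internal argument to compare against. Your sketch does track the strategy of Epstein--Marden--Markovic reasonably faithfully: one defines $\mathcal{T}_0$ so that the bent lamination $y E_{x\mu}(\mu)$ has roundness below $c_2$, uses the embeddedness criterion (Theorem \ref{c2embed}) to get the Jordan curve $\phi_t(\Sp^1)$, shows that $t\mapsto\phi_t(\zeta)$ is holomorphic for each fixed $\zeta\in\Sp^1$, and then extends the resulting holomorphic motion to all of $\rs$ to obtain $\Phi_t$ with the stated dilatation bound.

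Two cautions. First, the Douady--Earle extension is not a valid alternative for item (5): it produces a conformally natural extension of a single circle homeomorphism but gives no control of the dilatation in terms of $|h(t)|$, so it cannot deliver items (3) and (5) simultaneously. The right mechanism is the equivariant version of Slodkowski's extension theorem due to Earle, Kra, and Krushkal, which extends a $G$-equivariant holomorphic motion equivariantly while preserving the $\frac{1+|\lambda|}{1-|\lambda|}$ quasiconformality bound; listing Douady--Earle as an option is a genuine confusion, not a mere stylistic slip. Second, and more seriously, you correctly identify the holomorphic dependence of $\phi_t$ on $t$ as the technical heart of the argument and then leave it as a one-line appeal to normal families. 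In EMM1 this is where most of the real work lives, and passing from finite-leaved approximations $\mu_n$ to general $\mu$ requires uniform control (roundness and bilipschitz constants that do not degenerate along the sequence, plus continuity of $P_{\mu_n}$ in $\mu_n$ in a strong enough topology) that your sketch neither supplies nor precisely names. As a reconstruction of the strategy this is sound; as a proof it is missing exactly its hardest piece.
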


One obtains the following immediate corollary, whose proof we sketch as a warm-up
for the proof of our main result.

\begin{corollary}{\rm (Epstein-Marden-Markovic \cite{EMM2})}
\label{smallsullivan}
If $\Omega\subset \rs$ is a simply connected, hyperbolic domain, $\mu$
is the bending lamination of $\chb$ and \hbox{$||\mu||< c_2$}, then there is a conformally
natural $K_t$-quasiconformal map \hbox{$\phi:\Omega\to \chb$} which admits a bounded homotopy
to the nearest point retraction.
\end{corollary}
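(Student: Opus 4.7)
The plan is to apply Theorem \ref{holo motion} directly to a rescaled version of the bending lamination $\mu$ of $\chb$. Since $\|\mu\|<c_2<1$, we would set $\mu_0=\mu/\|\mu\|$, so that $\|\mu_0\|=1$, and choose the parameter $t_0=i\|\mu\|$. First I would verify $t_0\in\mathcal T_0$: with $x=0$ we have $f(1,0)=\min(\sinh^{-1}(\sinh 1),\sinh 1)=1$, so $\lceil f(1,0)\rceil=1$ and the condition $|y|<c_2/\lceil f(1,x)\rceil$ reduces precisely to $\|\mu\|<c_2$, which is our hypothesis.

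Next I would invoke Theorem \ref{holo motion} to get a region $\Omega_{t_0}$ bounded by the Jordan curve $\phi_{t_0}(\Sp^1)$, together with a $K_{t_0}$-quasiconformal map $\Phi_{t_0}:\Hp\to\Omega_{t_0}$, where $K_{t_0}=K_t$ as in the statement. By part (2) of the theorem (with $x=0$ so that the earthquake factor is trivial), the bending measure on $\chb_{t_0}$ equals $\|\mu\|\,E_0(\mu_0)=\mu$. Since $\Omega$ is simply connected, its bending lamination determines it up to M\"obius transformation (the pleated plane $P_\mu$ is a bilipschitz embedding by Theorem \ref{c2embed}, so $\chb=P_\mu(\Hp)$ is recovered, and then $\Omega=\rs\setminus CH(\rs\setminus\Omega)$). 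Post-composing $\Phi_{t_0}$ with the appropriate M\"obius transformation we may assume $\Omega_{t_0}=\Omega$ and $\chb_{t_0}=\chb$.

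Now I would define
$$\phi=P_\mu\circ \Phi_{t_0}^{-1}:\Omega\to\chb.$$
Since $P_\mu:\Hp\to\chb$ is a local isometry (Lemma \ref{pleated plane and dome}) and in this simply connected situation a homeomorphism, it is conformal between the intrinsic conformal structures, so $\phi$ has the same dilatation $K_t$ as $\Phi_{t_0}^{-1}$. For conformal naturality, any group $\Gamma$ of M\"obius transformations preserving $\Omega$ preserves $\mu$; part (5) of Theorem \ref{holo motion} provides the equivariance of $\Phi_{t_0}$, and $P_\mu$ is also equivariant (up to the induced isometric action on $\chb\subset\Ht$), so $\phi$ is $\Gamma$-equivariant.

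The remaining issue is the bounded homotopy to the nearest point retraction $r:\Omega\to\chb$. The main obstacle here is verifying that the boundary correspondences of $\phi$ and $r$ coincide: by part (4) of Theorem \ref{holo motion}, $\Phi_{t_0}$ extends continuously to $\Sp^1$ via $\phi_{t_0}=\C E_{t_0}$, and this boundary map identifies $\partial\Hp$ with $\partial\Omega=\partial\chb$ in the way induced by the pleated plane, which is exactly the way $r$ extends continuously to the boundary (being the identity on $\partial\Omega$). Thus $\phi$ and $r$ induce the same map on $\partial\Omega$, and a straight-line homotopy along geodesic segments in $\Ht$ between $\phi(z)$ and $r(z)$ (both lying on $\chb$, which is bilipschitz to $\Hp$) is bounded because the two lifts to the universal covers fellow-travel; compactness of the boundary values forces the distance $d_{\Ht}(\phi(z),r(z))$ to stay uniformly bounded. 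This boundary-matching step is the one that requires the most care, but it is standard in light of parts (1) and (4) of Theorem \ref{holo motion}.
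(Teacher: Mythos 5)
Your proof follows essentially the same route as the paper: normalize the bending lamination to unit roundness, apply Theorem \ref{holo motion} at $t_0=i\|\mu\|$, set $\phi=P_\mu\circ\Phi_{t_0}^{-1}$, and deduce conformal naturality from part (5) and the bounded homotopy from the fact that $\phi$ and $r$ are quasi-isometries agreeing on $\partial\Omega$. The paper is terser about the identification $\Omega_{t_0}=\Omega$ and invokes \cite[Proposition~4.3.1]{FM07} in place of your fellow-traveling discussion, but the underlying ideas coincide.
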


\begin{proof}
Let $y=||\mu||$ and $t=iy$. Since $y\in (0,c_2)$, $t\in\mathcal{T}_0$.
Let 
$$\Phi_t:\Hp\to \Omega_t$$ 
be the $K_t$-quasiconformal map provided by Theorem \ref{holo motion}.
Theorem \ref{holo motion}(2) allows us to conclude that $\Omega_t=\Omega$,
so
$$\phi=P_\mu\circ \Phi_t^{-1}:\Omega\to\chb$$
is a $K_t$-quasiconformal map.

Since $\phi$ is quasiconformal, it is a quasi-isometry (see, e.g, \cite[Theorem 4.3.2]{FM07}).
Since $\Omega$ is a Jordan domain and the nearest point retraction $r$ and $\phi$ are quasi-isometries  which
agree on $\partial\Omega$, there is a bounded homotopy from $\phi$ to $r$ (see, e.g.
\cite[Proposition 4.3.1]{FM07}). 

Let $\Gamma$ be a group of M\"obius transformations
preserving $\Omega$. Then it preserves $\chb$, and hence its bending lamination.
We may pull-back the restriction of the action of $\Gamma$ on $\chb$, via $P_\mu$, 
to obtain a group $\Gamma'$
of conformal automorphisms of $\Hp$ which preserve $\mu$. Theorem \ref{holo motion}(5)
implies that we may choose $\Phi_t$ so that there exists a homomorphism 
\hbox{$\rho_t:\Gamma'\to\Gamma_t$} so that $\Phi_t$ is $\rho_t$-equivariant.
One may readily check that $\Gamma_t$ must agree with $\Gamma$, since they agree on $\partial\Omega$, and hence
that $\phi$ is conformally natural. 
\end{proof}

\subsection{Complex angle scaling}
\label{scaling}

Let $\mu$ be a measured lamination on $\Hp$ with unit roundness, i.e. $||\mu||=1$,
and let $t_0=iy_0$ be a point in $\U\cap  {\mathcal T}_0$
(where $\U$ is the upper half plane). Let $\Omega_0=\Omega_{t_0}$ be
the domain produced by Theorem \ref{holo motion}. In \cite{EMM2}
Epstein, Marden and Markovic  define a map
\hbox{$\Psi:\U \times \Omega_{0} \rightarrow \rs$} such that
each \hbox{$\Psi_t=\Psi(t,\cdot):\Omega_0\to \rs$} is  a locally injective quasiregular map.
We recall that a locally injective map $f:\Omega_0\to\rs$ is \hbox{$K$-{\em quasiregular}} if $f = h\circ g$ where
$g$ is a \hbox{$K$-quasiconformal} homeomorphism and 
$h$ is locally injective and holomorphic on the image of $g$. 
In this section, we describe their construction and its relevant properties. 

A key tool in their construction is the complex angle scaling map defined on crescents.
A {\em crescent} $C$ is a region bounded by two circles which intersect transversely.
Any crescent with interior angle $\theta$ can be mapped by a M\"obius transformation to
$$W_\theta= \left\{ z \ \left| \  0 \leq \arg(z) \leq \theta\right.\right\}.$$ 
Given $w\in\C$, we
define an angle-scaling map on $W_\theta$ by setting 
$$S_w(z) = ze^{w\arg(z)\theta}.$$
If \hbox{$({\rm Im}(w)+1)\theta < 2\pi$}, then $S_w$ is a quasiconformal homeomorphism
onto a crescent with interior angle \hbox{$({\rm Im}(w)+1)\theta$}.
In general, a {\em complex angle scaling} map with domain a crescent $C$ with interior
angle $\theta$ has the form
$\gamma S_w\beta $ for some $w$ where $\gamma$ and $\beta$ are M\"obius transformations
and $\beta(C)=W_\theta$.

Let $\mathcal{N}\subset T_1(\Ht)$ denote the set of unit tangent vectors
to geodesic rays $\overrightarrow{r(z)z}$ at the point $r(z)$ where $z\in\Omega_0$.
Informally, $\mathcal{N}$ is the set of normal vectors to $\chbz$. 
Let $\nu:\Omega_0\to \mathcal{N}$ be the obvious homeomorphism.
If we let ${\rm exp}_+:T_1(\Ht)\to \rs$ be the exponential map which takes a vector
to the end point of its corresponding infinite geodesic ray, then
${\rm exp}_+|_{\mathcal N}:\mathcal{N}\to \Omega_0$ is the inverse to
$\nu$ (see \cite[Lemma 4.9]{EMM2}).

For $t\in \C$, they define a map $N_t:\mathcal{N}\to T_1(\Ht)$ with the property
that if $p$ is the basepoint of a vector $n\in\mathcal{N}$ and $p$ lies in a gap or on a
bending line which does not have an atomic weight, then $N_t(n)$ is
the unique normal vector to the image of $\C E_t$ which has the correct orientation.
They then define a map
$\Psi_t:\Omega_0\to \rs$ by setting 
$\Psi_t(z)={\rm exp}_+\circ N_t\circ \nu$ if $r_0(z)$ 
lies in a gap or on a
bending line which does not have an atomic weight. Otherwise,
$r_0(z)$ lies in a crescent $r_0^{-1}(l)$ in $\Omega_0$ which maps to a bending line $l$ with positive measure.
On $r_0^{-1}(l)$, $\Psi_t$ is conjugate to the complex angle scaling map $S_{i(t-t_0)/t_0}$.
In particular, if $\mu(P_{t_0\mu}^{-1}(l))=\theta$, $t\in (0,\infty)$ and $t\theta < 2\pi$, then $\Psi_t$ 
is quasiconformal on
$r_0^{-1}(l)$ and its
image is a crescent of total angle $t\theta$ with ``endpoints'' $\partial \C E_t(l)$.
The map $\Psi:\U \times \Omega_{t_0} \rightarrow \rs$ is then defined by
setting $\Psi(t,z) = \Psi_t(z)$.

We summarize their results below:

\begin{theorem}{\rm (Epstein-Marden-Markovic \cite[Theorem 4.13]{EMM2})}
Let $\mu$ be a measured lamination on $\Hp$ such that $||\mu|| = 1$ and 
let $t_0 = iy_0 \in\U\cap {\mathcal T}_0$. Then there exists a continuous map
$\Psi:\U \times \Omega_{0} \rightarrow \rs$ such that
\begin{enumerate}
\item 
The map $\Psi_{t_0} = \mbox{id}$.
\item 
For $t \in {\mathcal T}_0$, $\Psi_t$ can be extended continuously to
a map \hbox{$\psi_t:\partial \Omega_{0}\to\rs$} so that \hbox{$\psi_t \circ \phi_{t_0} = \phi_t$}.
\item
For $t \in \U\cap{\mathcal T}_0$, $\Psi_t$ is injective and 
$\Psi_t(\Omega_0) = \Phi_t(\Hp) = \Omega_t$.
\item 
The map $\Psi_t$ is a locally injective  $L_t$-quasiregular mapping, where 
$$L_t = \frac{1 + |\kappa(t)|}{1-|\kappa(t)|}\ \ \ {\rm and}\ \ \  |\kappa(t)| =
\left| \frac{t-t_0}{t+t_0}\right|.$$
\item
If $G$ is a group of M\"obius transformations preserving $\Omega_0$,
then there is a homomorphism $\rho_t:G\to  G_t$ where 
$G_t$ is also a group of M\"obius transformations, such that 
$$\Psi_t \circ g = \rho_t(g)\circ \Psi_t$$
for all $g\in G$.
\end{enumerate}
\label{scale}
\end{theorem}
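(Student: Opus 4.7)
The construction of $\Psi$ has already been described piecewise in the paragraphs preceding the statement: on the full-measure set of points $z\in\Omega_0$ whose nearest-point image $r_0(z)$ lies in a gap or on a non-atomic leaf of $t_0\mu$, we use the normal-vector formula $\Psi_t(z)=\exp_+\circ N_t\circ\nu(z)$, and on each crescent $r_0^{-1}(l)$ over an atomic leaf of weight $\theta$, we use a complex angle-scaling $S_w$ with $w=i(t-t_0)/t_0$ conjugated by the M\"obius normalization of the crescent. The proof is therefore a verification that this piecewise definition is continuous and has properties (1)--(5); my plan is to treat the five properties in the order (1), (2), (4), (3), (5).

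Properties (1) and (2) are essentially bookkeeping. At $t=t_0$ we have $w=0$, hence $S_w=\mathrm{id}$ on every crescent, while the normal-vector piece is the identity on the remainder of $\Omega_0$ by the identification $\nu^{-1}=\exp_+|_\mathcal{N}$ (\cite[Lemma 4.9]{EMM2}), giving (1). For (2), the boundary extension is forced: on $\partial\Omega_0$ the map $\phi_{t_0}^{-1}$ identifies the boundary with $\Sp^1$ and the piecewise definition of $\Psi_t$ agrees with $\phi_t\circ\phi_{t_0}^{-1}$ along it, so we simply set $\psi_t=\phi_t\circ\phi_{t_0}^{-1}$. Joint continuity in $(t,z)$ requires a compatibility check along each boundary arc of a crescent: on each such arc $S_w$ restricts to a M\"obius transformation, and this must agree with the M\"obius transformation with which the adjacent gap is sent to its pleated image. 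This is a local, geometric check using that a crescent of interior angle $\theta$ is sent to a crescent whose boundary arcs lie in the geodesic planes carrying the two bent adjacent gaps.

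For (4), note that on any gap, $\Psi_t$ is the restriction (via $\exp_+$) of the $\mathbb{H}^3$-isometry realigning the supporting totally geodesic plane with its complex-earthquake image, hence is a M\"obius transformation and $1$-quasiregular. On each crescent the dilatation equals that of $S_w$ with $w=i(t-t_0)/t_0$, and a direct computation of the Beltrami coefficient $\bar\partial S_w/\partial S_w$ shows it depends only on $w$ and not on $\theta$. Writing the M\"obius map $\U\to\D$, $t\mapsto(t-t_0)/(t+t_0)$, which sends $t_0$ to $0$, one checks that this is the normalization under which the dilatation takes the canonical form $(1+|\kappa(t)|)/(1-|\kappa(t)|)$ with $|\kappa(t)|=|(t-t_0)/(t+t_0)|$. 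The uniform supremum of this bound across all crescents gives the $L_t$-quasiregularity on all of $\Omega_0$.

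Given (2) and (4), property (3) follows: $\Psi_t$ is a locally injective quasiregular map on $\Omega_0$ extending continuously to the homeomorphism $\psi_t=\phi_t\circ\phi_{t_0}^{-1}$ of $\partial\Omega_0$ onto $\partial\Omega_t$, and such a map must be a homeomorphism onto $\Omega_t$ by the argument principle for quasiregular maps (or, equivalently, by factoring through a quasiconformal model and applying a degree argument). Property (5) is inherited from Theorem~\ref{holo motion}(5) together with naturality of both pieces: normal vectors transform covariantly under isometries of $\Ht$, and complex angle-scaling commutes, up to conjugation by its defining M\"obius normalization, with M\"obius transformations permuting crescents. The principal obstacle throughout is the continuity/compatibility check at crescent boundaries and the identification of the Beltrami coefficient of $S_w$ with the canonical $\kappa(t)$; once those are in hand, the remaining steps are standard quasiregular machinery.
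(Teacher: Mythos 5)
The paper does not prove this theorem: it is imported from Epstein--Marden--Markovic \cite{EMM2}, the text of Section~\ref{scaling} records only the construction (the normal-vector map $N_t$, the model crescent $W_\theta$, and the angle-scaling maps $S_w$), and the only in-paper commentary is the remark that property~(5), absent from the stated Theorem~4.13 of \cite{EMM2}, follows from equivariance of complex earthquakes and the explicit form of $\Psi$. So there is no proof in the paper to compare yours against; what can be judged is whether your sketch plausibly reconstructs the argument of \cite{EMM2}, and in outline it does, with the verification order $(1),(2),(4),(3),(5)$ being a sensible organization.

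The Beltrami computation you invoke for (4) genuinely closes, and is worth displaying: in the coordinate $\zeta=\log z$, with $S_w(z)=ze^{w\arg z}$, the map becomes $\R$-linear, $\zeta\mapsto\left(1-\frac{iw}{2}\right)\zeta+\frac{iw}{2}\bar\zeta$, whose Beltrami coefficient is $\frac{iw}{2-iw}$; substituting $w=i(t-t_0)/t_0$ yields $\frac{t_0-t}{t_0+t}=-\kappa(t)$, so $|\kappa(t)|$ is exactly the modulus of the Beltrami coefficient on every crescent, uniformly in the crescent angle $\theta$. (This uses $S_w(z)=ze^{w\arg z}$; the paper's printed $S_w(z)=ze^{w\arg(z)\theta}$ appears to carry an extraneous $\theta$, since only without it does the image of $W_\theta$ have angle $({\rm Im}(w)+1)\theta$, and only without it is the dilatation $\theta$-independent as you claim.) Be aware, however, that the two points you defer are the analytic substance of \cite{EMM2}: continuity of $\Psi$ across the potentially uncountable family of crescents and their accumulation leaves, and upgrading the pointwise bound on the full-measure union of gaps and atomic crescents to global $L_t$-quasiregularity. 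These are considerably more than ``a local, geometric check.'' Finally, ``inherited from Theorem~\ref{holo motion}(5)'' slightly overstates the connection for (5), since $\Psi_t$ is not $\Phi_t\circ\Phi_{t_0}^{-1}$ on the interior; the correct justification is the second half of your sentence (naturality of the normal-vector and angle-scaling constructions under M\"obius conjugation), which is also what the paper's remark says.
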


{\bf Remark:} The equivariance property (5) is not included in the statement of Theorem 4.13
in \cite{EMM2}, but is discussed before the proof of Theorem 6.13 in \cite{EMM2}.
It follows readily from  equivariance properties of complex earthquakes and the
explicit description of the map $\Psi$ given above.

\section{Bounds on quasiconformal dilatation}

We are now ready to prove our main results. 
We first provide a sketch of the argument. Given a uniformly perfect domain $\Omega$, 
there exists a lamination $\mu$ so that \hbox{$P_\mu:\Hp\to\chb$} is  a universal covering map.
Let $\mu_0=\mu/||\mu||$. If we choose $t_0=iy_0\in\mathcal{T}_0$, then
Theorem \ref{holo motion} provides a $K_{t_0}$-quasiconformal map  
$\Phi_0:\Hp\to\Omega_0=\Omega_{t_0}$.
Let $\Psi$ be the map associated to $\mu_0$ and $t_0$ provided by Theorem \ref{scale}
and let $k=||\mu||i$.
We use the explicit description of $\Psi$ to check that $\Psi_k$ is a
$L_k$-quasiregular covering map with image
$\Omega$. We check that $\Psi_k\circ\Phi_0$ descends,
under the covering map $P_\mu$, to a
\hbox{$K_{t_0}L_k$}-quasiconformal map \hbox{$f:\chb\to\Omega$}. We then check that $\phi=f^{-1}$ has the desired properties.

\medskip\noindent
{\bf Proof of Theorems \ref{main} and \ref{main2}:} 
Let $\hat\mu$ be the bending lamination of $\chb$. Then $\hat \mu$ lifts to give a measured lamination
$\mu$ on $\Hp$. One may normalize so that $P_\mu:\Hp\to\chb$ is a locally isometric, universal
covering map (see Lemma \ref{pleated plane and dome}).
Let $G$ be the group of covering transformations of $P_\mu$.

Let $k=||\mu || i$ and define $\mu_0={1\over |k|}\mu$,
so $\mu = |k| \mu_0$ and $||\mu_0|| = 1$.

Choose $t_0=iy_0\in \U\cap{\mathcal T}_0$. Theorem \ref{holo motion} gives a $K_{t_0}$-quasiconformal map 
$$\Phi_{t_0}:\Hp\rightarrow \Omega_{t_0}$$
whose image is a Jordan
domain $\Omega_{t_0}$. For simplicity, we denote
$\Phi_0=\Phi_{t_0}$ and $\Omega_0=\Omega_{t_0}$.
Let $\rho_0=\rho_{t_0}$ be the homomorphism provided by Theorem \ref{holo motion} and
let $G_0=\rho_0(G)$.
Let $r_0:\Omega_0 \rightarrow \chbz$
be the nearest point retraction.
Notice that 
$$P_0=P_{t_0\mu_0}:\Hp\to\chbz$$
is an isometry which
extends continuously  to a map $\phi_0:\Sp^1\to \rs$ 
and that
$$\Phi_0\cup\phi_0:\Hp\cup \Sp^1\to \rs$$
is continuous
(see Lemma \ref{pleated plane and dome} and Theorem \ref{holo motion}).
Then 
$$F_0=P_0\circ \Phi_0^{-1}:\Omega_0\to\chbz$$
is a conformally natural $K_{t_0}$-quasiconformal homeomorphism which
extends to the identity on $\partial\Omega_0$
(see Corollary \ref{smallsullivan}).

Let $\Psi:\U\times\Omega_0\rightarrow \rs$ be the map associated to $\mu_0$ and $t_0$ provided by Theorem \ref{scale}.
Consider the  map \hbox{$\Psi_k:\Omega_0 \rightarrow \rs$}. Notice that $\C E_k=P_\mu$.
So, by the definition of $\Psi$, if  $p$ lies in a component of 
$\Hp-\mu$ or on a bending line which does not have atomic weight, then
$\Psi_k(r_0^{-1}(P_0(p))=r^{-1}(P_\mu(p))$. Moreover, if $l$ is a leaf with atomic weight, 
then  $\Psi_k$ restricts to a  quasiconformal homeomorphism from $r_0^{-1}(P_0(l))$ to $r^{-1}(P_\mu(l))$ given by a complex angle scaling 
map between the two crescents. 

Therefore, $\Psi_k$ is a conformally natural, locally injective \hbox{$L_k$-quasiregular} map with image $\Omega$.
Moreover, $\Psi_k$ is a topological universal covering map with covering transformations 
$G_0$.

The composition 
$$F=\Psi_k\circ \Phi_0:\Hp\to \Omega$$
is a $K_{t_0}L_k$-quasiregular universal covering map
with covering transformations $G$. Therefore, it descends, under the covering map
$P_\mu:\Hp\to \chb$, to a $K_{t_0}L_k$-quasiconformal homeomorphism
$$f:\chb\to \Omega.$$

\hspace{1.7in}\xymatrix{
&\Hp \ar[ld]_{P_\mu}\ar@{-->}[ldd]^{F}\ar@{-->}[rdd]_{\Phi_0} \ar[rd]^{P_0}\\
\mbox{Dome}(\Omega) &  & \mbox{Dome}(\Omega_0)\\
\Omega \ar[u]^r & & \Omega_0 \ar[u]_{r_0}\ar[ll]^{\Psi_k} }

We claim that $f$ is a homotopy inverse for $r$ and that there is a bounded homotopy between
$\phi=f^{-1}$ and $r$. Consider the conformal universal covering map 
\hbox{$p:\tilde\Omega\to\Omega$ of $\Omega$.}
Let $\tilde G$ denote the group of covering transformations of the universal covering map $p$.
The quasiregular map $F$ lifts to a quasiconformal map $\tilde F:\Hp\to\tilde\Omega$ which conjugates the
action of $G$ to the action of $\tilde G$.
The maps $\tilde F$ and $\Phi_0$ are quasi-isometries,
since they are quasiconformal (see, e.g. \cite[Theorem 4.3.2]{FM07}).

Let $l$ be a geodesic in $\Hp$ which is either disjoint from $\mu$ or is a leaf of $\mu$. 
Let $a$ and $b$ be the endpoints of $l$.  Then
$\Phi_0(l)$ is a quasigeodesic with endpoints $\phi_0(a)$ and $\phi_0(b)$.
Let $A_l=r_0^{-1}(P_0(l))$.  Since there is a bounded homotopy between $F_0$ and $r_0$,
there is a bounded Hausdorff distance between $\Phi_0(l)$ and $A_l$. In particular, $\Phi_0(l)$ and
$A_l$ accumulate at the same endpoints at infinity. By construction 
$$\Psi_k(A_l)=r^{-1}(P_k(l))=B_l$$ 
and $\Psi_k$ is a complex angle scaling map on $A_l$.
Therefore, $F(l)$ is a quasigeodesic which lies a bounded Hausdorff distance from $B_l$.

Recall that $P_\mu:\Hp\to \chb$ is a locally isometric covering map, so this gives an explicit identification
of the universal cover $\chbu$ of $\chb$ with $\Hp$. With this identification, 
\hbox{$\tilde F:\Hp\to\tilde\Omega$}
is a lift of \hbox{$f:\chb\to \Omega$.} Fix a leaf $l_0$ of $\mu$. We may then choose a lift 
\hbox{$\tilde r:\tilde\Omega\to\chbu=\Hp$}
so that \hbox{$\tilde r(\tilde F(l_0))$} is a quasigeodesic which is a  bounded Hausdorff distance from $l_0$.
With this normalization,  if $l$ is any geodesic which either lies in $\mu$ or is disjoint from $\mu$,
then \hbox{$\tilde r(\tilde F(l))$} is a quasigeodesic which is a  bounded Hausdorff distance from $l$.
Since every point in $\Sp^1$ is an endpoint of such a geodesic and \hbox{$\tilde r\circ\tilde F$} is a quasi-isometry
(being a composition of quasi-isometries), we see that \hbox{$\tilde r\circ \tilde F$} extends to the identity
on $\Sp^1$ and the straight-line homotopy between \hbox{$\tilde r\circ \tilde F$} and the identity map on $\Hp$
is a bounded homotopy (see, e.g. \cite[Proposition 4.3.1]{FM07}).
It follows, since $\tilde F$ is a quasi-isometry and a homeomorphism, that the straight-line homotopy between $\tilde r$ and $\tilde F^{-1}$ is also a
bounded homotopy.
This homotopy descends to give a bounded homotopy between $r$ and
$\phi=f^{-1}$.

Let $\Gamma$ be the group of conformal automorphisms of $\Omega$. $\Gamma$ is also
a group of conformal automorphisms of $\chb$ which preserves the bending lamination.
Let $\hat\Gamma$ be the group of all conformal automorphisms of $\Hp$ which
arise as lifts, under the covering map $P_\mu$, of restrictions of elements of $\Gamma$  to 
$\chb$. Notice that
$G\subset\hat\Gamma$ and that $\hat\Gamma$ preserves $\mu$. 
Theorems \ref{holo motion} and \ref{scale} insure that we may assume that
there exist homomorphisms $\rho_0:\hat \Gamma\to \hat\Gamma_0$ and
$\rho_k:\hat\Gamma_0\to \hat\Gamma_k$ so that $\Phi_0$ is $\rho_0$-equivariant
and $\Psi_k$ is $\rho_k$-equivariant. Notice that $\rho_0(G)=G_0$,
$\rho_k(G_0)$ is trivial, $\rho_k(\hat\Gamma_0)=\Gamma$, and
\hbox{$F:\Hp\to \Omega$} is \hbox{$(\rho_k\circ\rho_0)$}-equivariant. Since the action of $\hat\Gamma$ on
$\Hp$ descends, via $P_\mu$, to the action of $\Gamma$ on $\chb$, it follows that
$f$, and hence $\phi$, is conformally natural.

Therefore, $\phi$ is a conformally natural $K_{t_0}L_k$-quasiconformal map from $\Omega$
to $\chb$ which admits a bounded homotopy to the nearest point retraction.
It only remains to check the claimed bounds on the quasiconformal dilatation.

Theorem \ref{scale} implies that
$$L_k={||\mu||\over y_0},$$
so
$$K_{t_0}L_k= \frac{K_{t_0}}{y_0} ||\mu||$$ 

We can choose $y_0\in (0,1/3)$ arbitrarily close to $1/3$ in which case we can choose $K_{t_0}=2$,
so we may assume that
$$K_{t_0}L_k\le 6||\mu||.$$

Applying Proposition \ref{norm bound} 
and Corollary \ref{norm bound 2}, which give  bounds on $||\mu||$ in terms of  the injectivity radii of
$\Omega$ and $\chb$, we obtain   
$$ K_{t_0}L_k\le 6\left(\frac{4\pi}{\hat{\nu}} + 2\pi\right) = N(\hat\nu)$$
and
$$K_{t_0}L_k\le 6 \left(8\pi e^m e^{\frac{\pi^2}{2\nu}} + 2\pi\right)  = M(\nu).$$
It follows that $\phi$ is $M(\nu)$-quasiconformal and $N(\hat\nu)$-quasiconformal as claimed,
which completes the proof.
\eproof

\section{Round annuli}

Let $\Omega(s)$ denote the round annulus lying between concentric circles of radius $1$ and $e^{ s}>1$ about the origin. 
Then $\Omega(s)$ has conformal modulus $s/2\pi$. (Recall that if $A\subset\rs$ is a hyperbolic domain which is homeomorphic
to an annulus, then there exists a unique $s_A>1$ such that $A$ is conformal to $\Omega(s_A)$ and one
defines its conformal modulus to be \hbox{${\rm mod}(A)=s_A/2\pi$}.)
In the Poincar\'e metric  $\Omega(s)$ is a complete hyperbolic annulus with core
curve of length 
$${2\pi^2\over s}={\pi\over {\rm mod}(\Omega(s))}.$$
In its intrinsic metric,
${\rm Dome}(\Omega(s))$ is a complete hyperbolic annulus with core curve of
length $2\pi\over \sinh({s\over 2})\ $.
In particular, ${\rm Dome}(\Omega(s))$ has modulus
$${\rm mod}({\rm Dome}(\Omega(s)))=\sinh(s/2)/2.$$
Therefore, the minimal quasiconformal dilatation $K(s)$ of a quasiconformal map 
between $\Omega(s)$ and  ${\rm Dome}(\Omega(s))$ is given by
$$K(s)={\pi\sinh(s/2)\over s}.$$

The injectivity radius bounds are given by
$$\nu(s)=\nu(\Omega(s))={\pi^2\over s}\ \ \ {\rm and}\ \ \ \hat \nu(s)=\hat\nu(\Omega(s))={\pi\over\sinh(s/2)}.$$
Therefore, as $s\to\infty$ and $\nu\to0$,
$$K(s) = \frac{\nu(s)\sinh(\frac{\pi^2}{2\nu(s)})}\pi \approx \frac{\nu(s)  e^{\frac{\pi^2}{2\nu(s)}}}{2\pi} = O\left(\frac{M(\nu(s))}{\log(M(\nu(s)))}\right)$$
and $$K(s) = \frac{\pi^2}{2\hat\nu(s)\sinh^{-1}(\pi/\hat\nu(s))} \approx \frac{\pi^2}{2\hat\nu(s)\log(1/\hat\nu(s))} = O\left(\frac{N(\hat\nu(s))}{\log(N(\hat\nu(s)))}\right).$$

\section{General lower bounds on the quasiconformal constant}

In this section, we give an explicit lower bounds on the dilatation of a quasiconformal map from $\Omega$
to $\chb$ which admits a bounded homotopy to the nearest point retraction. As in the last section, our lower bound
has ``almost'' the same asymptotics as the upper bound in our main theorem.

\begin{prop}
\label{lower bound}
Let $\Omega$ be a uniformly perfect domain in $\rs$. Suppose that \hbox{$\phi:\Omega\to\chb$} is a $K$-quasiconformal
map which is homotopic to the nearest point retraction. If $\Omega$ contains
a point of injectivity radius $\nu\in(0,.5)$, in the Poincar\'e metric, then
$$K\ge {\nu e^{\pi^2\over 2 \sqrt{e}\nu}\over \pi^2 e^{\pi\over2}}=O\left(\nu e^{\pi^2\over 2\sqrt{e}\nu}\right).$$
\end{prop}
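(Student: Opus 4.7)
The strategy is to lift $\phi$ to cyclic covers associated with a short closed geodesic in $\Omega$ and apply the Gr\"otzsch modulus inequality. Since $z_0\in\Omega$ has injectivity radius $\nu\in(0,0.5)$, the homotopy class of a minimal loop through $z_0$ is represented by a simple closed geodesic $\alpha\subset\Omega$ of Poincar\'e length $L\le 2\nu<1$. Writing $T\in\pi_1(\Omega)$ for the deck transformation corresponding to $\alpha$ and setting $T^{*}=r_{*}(T)\in\pi_1(\chb)$, let $\alpha^{*}\subset\chb$ denote the intrinsic geodesic in the class of $T^{*}$, of length $L^{*}$.

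The cyclic covers $\widehat\Omega=\widetilde\Omega/\langle T\rangle$ and $\widehat{\chb}=\chbu/\langle T^{*}\rangle$ are conformal annuli of moduli $\pi/L$ and $\pi/L^{*}$. Since $\phi$ is homotopic to $r$, it induces the same homomorphism on $\pi_1$ and hence lifts to a $K$-quasiconformal homeomorphism $\widehat\phi\colon\widehat\Omega\to\widehat{\chb}$ between these annuli. Gr\"otzsch's modulus inequality then yields
$$K\;\ge\;\max\!\left(\frac{L^{*}}{L},\;\frac{L}{L^{*}}\right).$$

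To conclude it suffices to prove the sharp length comparison
$$\frac{L}{L^{*}}\;\ge\;\frac{\nu\,e^{\pi^{2}/(2\sqrt{e}\,\nu)}}{\pi^{2}\,e^{\pi/2}},$$
which says that the geodesic $\alpha^{*}$ in $\chb$ is exponentially shorter than $\alpha$ in $\Omega$ at the rate dictated by the injectivity radius. One can check from the round-annulus formulas in Section 5 that this inequality is comfortably satisfied there (with the factor $\sqrt{e}$ giving slack), so the bound is consistent with the sharpest known example.

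I expect this length comparison to be the hardest step. The crude Lipschitz bound from the 2-Lipschitz property of the lifted nearest point retraction $\widetilde r\colon\widetilde\Omega\to\chbu$ only gives $L^{*}\le 2L$, i.e., $L/L^{*}\ge 1/2$, which is vastly too weak for an exponential lower bound on $K$. The exponential smallness of $L^{*}$ relative to $L$ must instead be extracted from the pleated structure on $\chb$: the bending lamination $\mu$ has roundness bounded by $8\pi e^{m}e^{\pi^{2}/(2\nu)}+2\pi$ by Corollary \ref{norm bound 2}, and combining this with the collar-width estimate for $\alpha$ should quantify how much the pleating shortens the translation length of $T^{*}$ relative to that of $T$. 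The factor $\sqrt{e}$ in the exponent likely reflects the elementary estimate $e^{L/2}\le e^{\nu}\le\sqrt{e}$ (valid since $L\le 2\nu<1$), while the constants $\pi^{2}e^{\pi/2}$ collate the collar-width and bending-norm bounds in the spirit of the argument used to prove Proposition \ref{inj radii}.
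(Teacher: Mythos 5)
Your overall strategy — realize a short simple closed geodesic $\gamma$ in $\Omega$, pass to an annulus picture, use the modulus quasi-invariance under the $K$-quasiconformal map $\phi$, and then compare the lengths $L$ of $\gamma$ in $\Omega$ and $L^*$ of the corresponding geodesic in $\chb$ — is the right skeleton, and your choice to lift $\phi$ to the cyclic covers $\widehat\Omega$ and $\widehat{\chb}$ (moduli exactly $\pi/L$ and $\pi/L^*$) and apply the Gr\"otzsch inequality directly is actually a little cleaner than what the paper does. The paper instead uses the Maskit--Sugawa bounds on the maximal modulus of an \emph{embedded} annulus homotopic to a simple closed geodesic, applies $\phi^{-1}$ to such an annulus in $\chb$, and bounds the modulus of the image from above by $\pi/L$; that route picks up a harmless multiplicative loss. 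Either way, the Gr\"otzsch/modulus step is standard and the real content is, as you say, the length comparison.

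That length comparison, however, you do not prove, and the sketch you give for how to prove it points in the wrong direction. The paper's key input is a quotation of Theorem~5.1 of Canary's \cite{cbbc}, which gives the exponential upper bound
$$L^*\;<\;\frac{4\pi e^{.502\pi}}{e^{\pi^2/(\sqrt{e}\,L)}},$$
from which the stated lower bound on $K$ follows by elementary monotonicity (using that $t\mapsto t\,e^{\pi^2/(\sqrt{e}\,t)}$ is decreasing on $(0,1)$ and $L\le 2\nu<1$). Your suggestion to derive this from the roundness bound $\|\mu\|\le 8\pi e^m e^{\pi^2/2\nu}+2\pi$ of Corollary~\ref{norm bound 2} is not how the paper proceeds, and I do not see how it could work: that corollary gives an upper bound on roundness that \emph{grows} as $\nu\to 0$, and it enters the paper only in the proof of the main upper bound on $K$ (Theorems~\ref{main} and \ref{main2}), not in Proposition~\ref{lower bound}. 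What is actually needed is an exponentially small upper bound on $L^*$ in terms of $L$, which is an assertion about the nearest point retraction dramatically shrinking short curves; the roundness estimate does not control this. Likewise, your attribution of the exact constants $\sqrt{e}$ in the exponent and $\pi^2 e^{\pi/2}$ to a crude estimate $e^{L/2}\le \sqrt{e}$ and to ``the spirit of'' Proposition~\ref{inj radii} is not what happens: both come directly from the cited result in \cite{cbbc} together with the modulus bookkeeping. So the proposal, as written, has a genuine gap at its central step, and the sketch for filling it does not match the actual argument.
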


\begin{proof}
Let $\gamma$ be a simple closed geodesic in $\Omega$ with length $L\le2\nu$. Theorem 5.1 in \cite{cbbc} implies
that $r(\gamma)$ is homotopic to a closed geodesic $r(\gamma)^*$ in $\chb$ with length
$$L'<{4\pi e^{.502\pi}\over e^{\pi^2\over \sqrt{e}L}}<.153.$$

Maskit \cite[Proposition 1]{maskit-extremal}  and Sugawa \cite[Theorem 5.2]{sugawa}
showed that if  $\alpha$ is a simple closed geodesic  of length $l$ 
in a hyperbolic surface $X$ and
$M(\alpha)$ is the maximal modulus of an
annulus in $X$ whose core curve is homotopic to $\alpha$, then
$$ {\pi\over l}\ge M(\alpha)\ge{\pi\over l e^{l/2}}.$$

Therefore, $r(\gamma)^*$ is homotopic to the core curve of  an annulus $A'$  in $\chb$ with modulus
$${\rm mod}(A')\ge {\pi \over L'e^{L'\over 2}} > {2\over L'}.$$
Since $\phi$ is $K$-quasiconformal, $A=\phi^{-1}(A')$  has modulus
$${\rm mod}(A)\ge {{\rm mod}(A')\over K}\ge  {2\over KL'}.$$
On the other hand, since the core curve of $A$ is homotopic to $\gamma$, 
$${\rm mod}(A)\le {\pi\over L},$$
so
$$K\ge {2L\over \pi L'}\ge  {Le^{\pi^2\over\sqrt{e}L}\over 2\pi^2 e^{.502\pi}}\ge {\nu e^{\pi^2\over 2\sqrt{e}\nu}\over \pi^2e^{\pi\over 2}}$$
as claimed.
(In the final inequality we use the fact that \hbox{$h(\nu)=\nu  e^{\pi^2\over 2\sqrt{e}\nu}$} is decreasing
on the interval  $(0,.5)$.)
\end{proof}

\end{document}